\documentclass{amsart}

\usepackage{amssymb, amsmath}
\usepackage{mathrsfs, color}
\usepackage{amscd}
\usepackage{verbatim}

\usepackage{mathtools}

\usepackage{enumerate}

\allowdisplaybreaks

\theoremstyle{plain}
\newtheorem{theorem}{Theorem}[section]
\theoremstyle{remark}
\newtheorem{remark}[theorem]{Remark}
\newtheorem{example}[theorem]{Example}

\theoremstyle{plain}
\newtheorem{corollary}[theorem]{Corollary}
\newtheorem{lemma}[theorem]{Lemma}
\newtheorem{proposition}[theorem]{Proposition}
\newtheorem{definition}[theorem]{Definition}

\numberwithin{equation}{section}


\newcommand*{\lcdot}{\raisebox{-0.5ex}{\scalebox{2}{$\cdot$}}}
\def\avint_#1{\mathchoice%
      {\mathop{\kern 0.2em\vrule width 0.6em height 0.69678ex depth -0.58065ex
              \kern -0.8em \intop}\nolimits_{\kern -0.4em#1}}%
      {\mathop{\kern 0.1em\vrule width 0.5em height 0.69678ex depth -0.60387ex
              \kern -0.6em \intop}\nolimits_{#1}}%
      {\mathop{\kern 0.1em\vrule width 0.5em height 0.69678ex depth -0.60387ex
              \kern -0.6em \intop}\nolimits_{#1}}%
      {\mathop{\kern 0.1em\vrule width 0.5em height 0.69678ex depth -0.60387ex
              \kern -0.6em \intop}\nolimits_{#1}}}

\newcommand{\N}{\ensuremath{\mathbb{N}}}

\newcommand{\R}{\ensuremath{\mathbb{R}}}


\newcommand{\K}{\ensuremath{\mathcal{K}}}

\newcommand{\I}{\ensuremath{\mathcal{I}}}
\newcommand{\J}{\ensuremath{\mathcal{J}}}
\newcommand{\B}{\ensuremath{\mathcal{B}}}
\newcommand{\calL}{\ensuremath{\mathcal{L}}}
\newcommand{\T}{\ensuremath{\mathscr{T}}}
\newcommand{\scrS}{\ensuremath{\mathscr{S}}}

\newcommand{\Rr}{\ensuremath{\mathcal{R}}}

\DeclarePairedDelimiter\abs{\lvert}{\rvert}

\DeclarePairedDelimiter\cbrace\{\}
\DeclarePairedDelimiter\ha()
\DeclarePairedDelimiter{\ip}\langle\rangle
\DeclarePairedDelimiter{\nrm}\lVert\rVert

\newcommand{\nrmn}[1]{\biggl\|#1\biggr\|}

\newcommand{\dd}{\hspace{2pt}\mathrm{d}}
\newcommand{\vect}[1]{{\overline{#1}}}

\newcommand{\lb}{\langle}
\newcommand{\rb}{\rangle}

\renewcommand{\l}{\ensuremath{\ell}}


\begin{document}

\author{Chiara Gallarati}
\author{Emiel Lorist}
\author{Mark Veraar}
\email{C.Gallarati@tudelft.nl}
\email{EmielLorist@gmail.com}
\email{M.C.Veraar@tudelft.nl}
\address{Delft Institute of Applied Mathematics\\
Delft University of Technology \\ P.O. Box 5031\\ 2600 GA Delft\\The
Netherlands}

\date\today

\title{On the $\ell^s$-boundedness of a family of integral operators}

\begin{abstract}
In this paper we prove an $\ell^s$-boundedness result for integral operators with operator-valued kernels.
The proofs are based on extrapolation techniques with weights due to Rubio de Francia. The results will be applied by the first and third author in a subsequent paper where a new approach to maximal $L^p$-regularity for parabolic problems with time-dependent generator is developed.
\end{abstract}

\keywords{$\ell^s$-boundedness, extrapolation, integral operators, $A_p$-weights, Hardy-Littlewood maximal function}


\subjclass[2010]{Primary: 42B20; Secondary: 42B25, 42B37, 46E30, 47B55}

\thanks{The first and third author are supported by Vrije Competitie subsidy 613.001.206 and Vidi subsidy 639.032.427 of the Netherlands Organisation for Scientific Research (NWO)}

\maketitle

\section{Introduction}

In the influential work \cite{Weis01a, We}, Weis has found a characterization of maximal $L^p$-regularity in terms of $\Rr$-sectoriality, which stands for $\Rr$-boundedness of a family of resolvents on a sector.
The definition of $\Rr$-boundedness is given in Definition \ref{def:Rbdd}. It is a random boundedness condition on a family of operators which is a strengthening of uniform boundedness. Maximal regularity of solution to PDEs is important to know as it provides a tool to solve nonlinear PDEs using linearization techniques (see \cite{ClLi, Lun, Pruss02}). An overview on recent developments on maximal $L^p$-regularity can be found in \cite{DHP, KunstWeis}. Maximal $L^p$-regularity means that for all $f\in L^p(0,T;X)$, where $X$ is a Banach space, the solution $u$ of the evolution problem 
\begin{equation}\label{eq:Cauchy}
  \begin{cases}
    u'(t)&=A u(t)+f(t), \ \ t\in (0,T) \\
    u(0)&=0
  \end{cases}
\end{equation}
has the ``maximal'' regularity in the sense that $u',Au$ are both in $L^p(0,T;X)$. Using a mild formulation one sees that to prove maximal $L^p$-regularity one needs to bound a singular integral with operator-valued kernel $A e^{(t-s)A}$.

In \cite{GV} the first and third author have developed a new approach to maximal $L^p$-regularity for the case that the operator $A$ in \eqref{eq:Cauchy} depends on time in a measurable way. In this new approach $\Rr$-boundedness plays a central r\^ole again. Namely, the $\Rr$-boundedness of the family of integral operators $\{I_{k}: k\in \K\}\subseteq L^p(\R;X)$ is required in the proofs. Here $I_k$ is defined by
\begin{equation}\label{eq:inkTintro}
(I_{k} f)(t) = \int_{\R} k(t-s) T(t,s) f(s)\, ds,
\end{equation}
where $T(t,s)\in \calL(X)$ is a two-parameter evolution family and $\K$ is the class of kernels which satisfy $|k|*f\leq Mf$ for $f:\R\to \R_+$ simple and where $M$ is the Hardy-Littlewood maximal operator. For evolution families one usually sets $T(t,s) = 0$ if $t<s$. 

In this paper we give a class of examples for which we can prove the $\Rr$-boundedness of $\{I_{k}: k\in \K\}$. We now state a special case of our main result. It is valid for general families of operators $\{T(t,s):-\infty<s\leq t<\infty\}\subseteq \calL(L^q(\Omega,w))$. We will not use any regularity conditions for $(t,s)\mapsto T(t,s)$ below.

\begin{theorem}\label{thm:intro}
Let $\Omega\subseteq \R^d$ be an open set. Let $p,q\in (1, \infty)$. Assume that for all $A_q$-weights $w$,
\begin{equation}\label{eq:weightedcond}
\|T(t,s)\|_{\calL(L^q(\Omega,w))}\leq C, \ \ s,t\in\R,
\end{equation}
where $C$ depends on the $A_q$-constant of $w$ in a consistent way. Then the family of integral operators
$\{I_{k}: k\in \K\}\subseteq \calL(L^p(\R;L^q(\Omega)))$ as defined in \eqref{eq:inkTintro} is $\Rr$-bounded.
\end{theorem}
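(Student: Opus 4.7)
My plan is to reduce the $\Rr$-boundedness of $\{I_k:k\in\K\}$ to a scalar weighted $L^p(\R;L^q(\Omega,w))$-estimate for all $A_q$-weights $w$ on $\Omega$, and then to invoke a suitable Rubio de Francia extrapolation theorem to upgrade the scalar bound to the randomized (square-function) inequality characterising $\Rr$-boundedness.

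First, since $L^p(\R;L^q(\Omega))$ is a Banach lattice of finite cotype (as $p,q\in(1,\infty)$), the Khintchine--Maurey inequality shows that $\Rr$-boundedness of $\{I_k\}$ is equivalent to the $\ell^2$-square-function estimate
\[
\left\|\Bigl(\sum_{n} |I_{k_n} f_n|^2\Bigr)^{1/2}\right\|_{L^p(\R;L^q(\Omega))} \lesssim \left\|\Bigl(\sum_{n} |f_n|^2\Bigr)^{1/2}\right\|_{L^p(\R;L^q(\Omega))},
\]
uniformly over finite families $\{k_n\}\subseteq \K$ and $\{f_n\}\subseteq L^p(\R;L^q(\Omega))$. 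Hence it suffices to establish $\ell^s$-boundedness of the family for some (in fact any) $s\in(1,\infty)$.

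Next I would derive the key pointwise-in-$t$ weighted estimate. For any $A_q$-weight $w$ on $\Omega$, Minkowski's integral inequality together with the assumption \eqref{eq:weightedcond} yields
\[
\|I_k f(t)\|_{L^q(\Omega,w)} \leq \int_\R |k(t-s)|\,\|T(t,s)f(s)\|_{L^q(\Omega,w)}\,\dd s \leq C(w)\int_\R |k(t-s)|\,\|f(s)\|_{L^q(\Omega,w)}\,\dd s.
\]
The defining property of $\K$ then controls the right-hand side pointwise by $C(w)\cdot M(\|f(\cdot)\|_{L^q(\Omega,w)})(t)$, and taking the $L^p$-norm in $t$ and applying the classical $L^p$-boundedness of $M$ produces
\[
\|I_k f\|_{L^p(\R;L^q(\Omega,w))} \leq C'(w)\,\|f\|_{L^p(\R;L^q(\Omega,w))},
\]
uniformly in $k\in\K$, with $C'(w)$ depending in a consistent way on the $A_q$-constant of $w$.

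Finally, I would upgrade this scalar weighted inequality to the $\ell^2$-version via Rubio de Francia extrapolation applied in the $A_q$-weight $w$ on $\Omega$. Viewing the family $\{I_k:k\in\K\}$ as sublinear operators on $L^q(\Omega,w)$-valued functions of $t$, and using the controlled dependence of $C'(w)$ on the $A_q$-constant, the extrapolation machinery should deliver the $\ell^s$-inequality for every $s\in(1,\infty)$; specialising to $s=2$ and combining with the first step completes the argument. The main obstacle is formulating an extrapolation statement well-adapted to this mixed-norm setting: the weight $w$ has to be extrapolated only on the inner lattice $L^q(\Omega)$ while the outer $L^p(\R)$-norm and the kernel class $\K$ must remain intact, and the constants arising from \eqref{eq:weightedcond} and the weighted maximal inequality must combine consistently. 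Establishing a clean Rubio de Francia scheme in this Banach-function-space context is where I expect the main technical work of the paper to lie.
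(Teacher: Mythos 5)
Your steps 1 and 2 are sound and track the paper: step 1 is exactly the content of Remark \ref{rem:ell2R} (Kahane--Khintchine reduction to $\ell^2$-, hence to $\ell^s$-boundedness), and step 2 is the same Minkowski-plus-maximal-function argument that gives the uniform bound in Lemma \ref{thm:bddI}. The gap is in step 3, and it is not a cosmetic one. You propose to obtain the $\ell^2$-estimate by ``Rubio de Francia extrapolation applied in the $A_q$-weight $w$ on $\Omega$,'' i.e.\ extrapolating with respect to the \emph{inner} weight while the outer $L^p(\R)$-norm stays fixed. That scheme does not exist as a ready-made theorem: the CMP-style extrapolation results used in this paper (Theorem \ref{thm:baseweight} and its iterated extension Theorem \ref{thm:ineq}, as well as \cite[Corollary 3.12]{CMP}) always place the weight on the \emph{outer} (base) variable, and the conclusion is again a weighted outer-variable estimate; they cannot be fed an estimate in which the weight lives on the inner fibre $\Omega$ and the convolution in $t$ is wrapped around the outside. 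The paper is explicit about this obstacle in the introduction: ``It is unclear how to apply the extrapolation techniques of \cite{CMP} to the inner space $L^q$ directly.'' So your proposal correctly identifies the obstruction but does not resolve it --- you assume the existence of the very tool whose construction is the paper's contribution.

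The paper's actual route is structurally different precisely at this point. Rather than extrapolate on $\Omega$, it first uses the weighted hypothesis \eqref{eq:weightedcond} together with standard CMP extrapolation \emph{on $\Omega$ alone} to upgrade the operator family $\T=\{T(t,s)\}$ to an $\ell^s$-bounded family on $L^q(\Omega)$ for every $s\in(1,\infty)$ (Example \ref{ex:weighted}). Then, in Proposition \ref{prop:mainprel}, it replaces extrapolation on the inner space by Rubio de Francia's \emph{factorization} characterization of $\ell^s$-boundedness (\cite[Theorem VI.5.2]{Rubioboek}, \cite[Theorem 9.5.8]{GrafakosM}): the $\ell^s$-bound on $\T$ yields, for every $u\geq 0$ in $L^{q/(q-s)}(\Omega)$, a majorant $U$ with $\int|T_j(x,y)\phi|^s u\,d\mu\leq\int|\phi|^sU\,d\mu$, and this pointwise factorization is what lets the inner $L^q(\Omega)$-structure be peeled off and the estimate reduced to the $\ell^1$-boundedness (the $s=1$ endpoint) of the convolution family from Proposition \ref{prop:lbdd1}. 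This $\ell^1$-endpoint, proved by duality from the $\ell^\infty$-endpoint for $\widetilde M$, is entirely absent from your outline and is indispensable in the paper's chain of estimates. Finally, the paper only gets the desired $\ell^s$-bound directly for $s$ near $1$, and must run a duality argument (Proposition \ref{prop:inters}\eqref{it:dual}) to cover $s$ near $\infty$ and then complex interpolation (Proposition \ref{prop:inters}\eqref{it:interp}) to fill in $s=2$; and only \emph{after} that is outer extrapolation (Theorem \ref{thm:baseweight}) used, to pass from $p=q$ to general $p$. None of this bootstrap appears in your step 3; as written, it is not a proof but a statement of the problem the paper solves by a different device.
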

In the setting where $T(t,s) = e^{(t-s)A}$ where $A$ is as in \eqref{eq:Cauchy}, the condition \eqref{eq:weightedcond} also appears in \cite{Frohlich2} and \cite{HHH, HH} in order to obtain $\Rr$-sectoriality of $A$. There \eqref{eq:weightedcond} is checked by using Calder\'on-Zygmund and Fourier multiplier theory. Examples of such results for two-parameter evolution families will be given in \cite{GV}. 

As a consequence of the Kahane-Khintchine inequality (see Remark \ref{rem:ell2R}) one can see that in standard spaces such as $L^p$-spaces, $\Rr$-boundedness is equivalent to so-called $\ell^2$-boundedness. The latter is a special case of $\ell^s$-boundedness (see Definition \ref{def:ells}). In $L^p$-spaces this boils down to classical $L^p(\ell^s)$-estimates from harmonic analysis (see \cite{GrafakosC, GrafakosM}, \cite[Chapter V]{Rubioboek} and \cite[Chapter 3]{CMP}).
It follows from the work of Rubio de Francia (see \cite{Rubio82, Rubio83, Rubio84} and \cite{Rubioboek}) that $L^p(\ell^s)$-estimates are strongly connected to estimates in weighted $L^p$-spaces.

To prove Theorem \ref{thm:intro} we apply weighted techniques of Rubio de Francia. Without additional effort we actually prove the more general Corollary \ref{cor:weightedells}, which states that the family of integral operators on $L^p(v,L^q(w))$ is $\ell^s$-bounded for all $p,q,s\in (1, \infty)$ and for arbitrary $A_p$-weights $v$ and $A_q$-weights $w$. Both the modern extrapolation methods with $A_q$-weights as explained in the book of Cruz-Uribe, Martell and P{\'e}rez \cite{CMP} and the factorization techniques of Rubio de Francia (see \cite[Theorem VI.5.2]{Rubioboek} or \cite[Theorem 9.5.8]{GrafakosM}), play a crucial r\^ole in our work.
It is unclear how to apply the extrapolation techniques of \cite{CMP} to the inner space $L^q$ directly, but it does play a r\^ole in our proofs for the outer space $L^p$. The factorization methods of Rubio de Francia enable us to deal with the inner spaces (see the proof of Proposition \ref{prop:mainprel}).

In the literature there are many more $\Rr$-boundedness results for integral operators (e.g. \cite[Section 6]{DDHPV}, \cite[Proposition 3.3 and Theorem 4.12]{DHP}, \cite{GW03}, \cite[Section 3]{HaKu}, \cite[Section 4]{HytVer}, \cite[Chapter 2]{KunstWeis}). However, it seems they are of a different nature and cannot be used to prove Theorems \ref{thm:intro}, \ref{thm:mainnew} and Corollary \ref{cor:weightedells}.

Throughout this paper we will write $\B(X)$ for the space of all bounded operators on a Banach space $X$ and denote the corresponding norm as $\nrm{\cdot}_{\B(X)}$. Let $\calL(X)\subseteq \B(X)$ denote the subspace of all bounded {\em linear} operators. For $p\in [1, \infty]$ we let $p'\in [1, \infty]$ be such that $\frac{1}{p} + \frac{1}{p'} = 1$.

\medskip

\noindent \textbf{Acknowledgement} The authors thank the referees for helpful comments.

\section{Extrapolation and weights}

\subsection{Preliminaries on weights}

First we will introduce Muckenhoupt weights and state some of their properties. Details can be found in \cite[Chapter 9]{GrafakosM} and \cite[Chapter V]{Stein:harmonic}.

A weight is a locally integrable function on $\R^d$ with $w(x)\in (0,\infty)$ for almost every $x \in \R^d$. The space $L^p(\R^d,w)$ is defined as all measurable functions $f$ with
\begin{equation*}
  \nrm{f}_{L^p(\R^d,w)}=\ha*{\int_{\R^d}\abs{f}^p w \dd \mu}^\frac{1}{p}<\infty.
\end{equation*}

With this notion of weights and weighted $L^p$-spaces we can define the class of Muckenhoupt weights $A_{p}$ for all $p \in (1,\infty)$ for a fixed dimension $d \in \N$. Let $\avint_{Q} = \frac{1}{|Q|}\int_{Q}$. For $p\in (1, \infty)$ a weight $w$ is said to be an {\em $A_{p}$-weight} if
\begin{equation*}
   [w]_{A_{p}}=\sup_{Q} \avint_Q w(x) \dd x \ha*{\avint_Q w(x)^{-\frac{1}{p-1}}\dd x }^{p-1}<\infty,
\end{equation*}
where the supremum is taken over all cubes $Q\subseteq \R^d$ with axes parallel to the coordinate axes. The extended real number $[w]_{A_{p}}$ is called the {\em $A_p$-constant}.

Recall that $w \in A_{p}$ if and only if the Hardy-Littlewood maximal operator $M$ is bounded on $L^p(\R^d,w)$. The Hardy-Littlewood maximal operator is defined as
\begin{equation*}
  M(f)(x)=\sup_{Q \ni x}\avint_{Q}\abs{f(y)}\dd y, \ \ \  f \in L^p(\R^d,w)
  \end{equation*}
with $Q$ ranging over all cubes in $\R^d$ with axes parallel to the coordinate axes.

Next we will summarize a few basic properties of weights which we will need.
The proofs can be found in \cite[Theorems 9.1.9 and 9.2.5]{GrafakosM}, \cite[Theorem 9.2.5 and Exercise 9.2.4]{GrafakosM}, \cite[Proposition 9.1.5]{GrafakosM}.
\begin{proposition}
\label{prop:propweight}
  Let $w \in A_{p}$ for some $p \in [1,\infty)$. Then we have
  \begin{enumerate}
    \item\label{it:w2} If $p \in (1,\infty)$ then $w^{-\frac{1}{p-1}} \in A_{p'}$ with
$[w^{-\frac{1}{p-1}}]_{A_{p'}} = [w]_{A_{p}}^{\frac{1}{p-1}}$.
    \item\label{it:w4} For every $p \in (1,\infty)$ and $\kappa>1$ there is a constant $\sigma = \sigma_{p,\kappa,d}\in (1, p)$ and a constant $C_{p,d,\kappa}>1$ such that $[w]_{A_{\frac{p}{\sigma}}}\leq C_{p,\kappa,d}$ whenever $[w]_{A_p}\leq \kappa$. Moreover, $\kappa\mapsto \sigma_{p,\kappa,d}$ and $\kappa\mapsto C_{p,\kappa,d}$ can be chosen to be decreasing and increasing, respectively.
    \item\label{it:w5} $A_{p} \subseteq A_{q}$ and $[w]_{A_{q}}\leq [w]_{A_{p}}$ if $q >p$.
    \item\label{it:w7}  For $p\in (1, \infty)$, there exists a constant $C_{p,d}$ such that
    \[\nrm{M}_{\B(L^p(\R^d,w))}\leq C_{p,d}\cdot [w]_{A_{p}}^{\frac{1}{p-1}}.\]
  \end{enumerate}
\end{proposition}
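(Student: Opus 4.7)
The plan is to treat each assertion separately, with (1) and (3) following by elementary manipulations and (2), (4) resting on the reverse H\"older inequality for $A_p$-weights.

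For (1), I would set $v := w^{-1/(p-1)}$ and observe that $p'-1 = 1/(p-1)$, so that $v^{-1/(p'-1)} = v^{-(p-1)} = w$. Substituting these into the definition of $[v]_{A_{p'}}$ gives
\begin{equation*}
[v]_{A_{p'}} = \sup_Q \left(\avint_Q w^{-1/(p-1)}\dd x \right) \left(\avint_Q w \dd x\right)^{1/(p-1)} = [w]_{A_p}^{1/(p-1)}.
\end{equation*}
For (3), if $q > p$ then $(q-1)/(p-1) > 1$, so H\"older's inequality with normalized Lebesgue measure on a cube $Q$ yields $\avint_Q w^{-1/(q-1)} \leq (\avint_Q w^{-1/(p-1)})^{(p-1)/(q-1)}$; raising to the $(q-1)$-th power and multiplying by $\avint_Q w$ immediately gives $[w]_{A_q} \leq [w]_{A_p}$.

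The substantive content is in (2) and (4), both of which follow from the quantitative reverse H\"older inequality for $A_p$: if $[w]_{A_p} \leq \kappa$, there exist $\delta = \delta(p,d,\kappa) > 0$ and $C = C(p,d,\kappa)$, with $\delta$ decreasing and $C$ increasing in $\kappa$, such that $(\avint_Q w^{1+\delta})^{1/(1+\delta)} \leq C \avint_Q w$ on every cube. Applying this simultaneously to $w$ and to the dual weight $w^{-1/(p-1)}$ (whose $A_{p'}$-constant is controlled by (1)) self-improves the $A_p$-condition to an $A_{p/\sigma}$-condition for some $\sigma = \sigma(p,d,\kappa) > 1$, with the required monotonicity in $\kappa$; this proves (2). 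For (4), the bound is Buckley's sharp estimate, proved by passing to a dyadic maximal operator, applying reverse H\"older on a Calder\'on--Zygmund stopping-time decomposition, and optimizing the resulting exponent to extract the $[w]_{A_p}^{1/(p-1)}$ factor.

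The only nontrivial step is the reverse H\"older inequality together with the careful tracking of how $\delta$ and $C$ depend on $[w]_{A_p}$; this is the main obstacle, since it requires a quantitative Calder\'on--Zygmund argument rather than pure manipulation of the definition. Because all four facts are entirely classical and the paper needs only the stated quantitative forms, I would in practice not reprove them but cite \cite[Chapter 9]{GrafakosM} directly, as the authors do.
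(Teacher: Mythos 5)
Your proposal is correct and takes essentially the same approach as the paper: the paper offers no proof of its own but simply cites \cite[Theorems 9.1.9 and 9.2.5, Exercise 9.2.4, Proposition 9.1.5]{GrafakosM}, and you arrive at the same conclusion that citation is appropriate while supplying accurate sketches of the underlying arguments. Your computations for (1) and (3) check out, and you correctly identify the reverse H\"older inequality as the engine behind the openness property (2) and Buckley's stopping-time argument as the source of the sharp exponent in (4).
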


%

%

\subsection{Extrapolation}

The celebrated result of Rubio de Francia (see \cite{Rubio82, Rubio83, Rubio84}, \cite[Chapter IV]{Rubioboek}) allows one to extrapolate from weighted $L^p$-estimates for a single $p$ to weighted $L^q$-estimates for all $q$. The proofs and statement have been considerably simplified and clarified in \cite{CMP} and can be formulated as follows (see \cite[Theorem 3.9]{CMP}).
\begin{theorem}\label{thm:baseweight}
Let $f,g:\R^d \to \R_+$ be a pair of nonnegative, measurable functions and suppose that for some $p_0\in(1,\infty)$ there exists an increasing function $\alpha$ on $\R_+$ such that for all $w_0 \in A_{p_0}$
\begin{equation*}
\nrm{ f}_{L^{p_0}(\R^d,w_0)} \leq \alpha([w_0]_{A_{p_0}}) \nrm{ g}_{L^{p_0}(\R^d,w_0)}.
\end{equation*}
Then for all $p \in (1,\infty)$ there is a constant $c_{p,d}$ s.t.\ for all $w \in A_{p}$,
\begin{equation*}
\nrm{f}_{L^p(\R^d,w)}\leq 4 \alpha\Big(c_{p,d} [w]_{A_{p}}^{\frac{p_0-1}{p-1}+1}\Big) \nrm{g}_{L^p(\R^d,w)}.
\end{equation*}
\end{theorem}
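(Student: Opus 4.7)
My plan is the classical Rubio de Francia iteration argument, in the streamlined form presented by Cruz-Uribe--Martell--P\'erez \cite{CMP}. The central device is the iteration algorithm: given $s \in (1,\infty)$ and a weight $v$ such that $M \in \B(L^s(v))$, define
\[ Rh := \sum_{k=0}^\infty \frac{M^k h}{(2\|M\|_{\B(L^s(v))})^k}, \qquad 0 \le h \in L^s(v). \]
A geometric-series calculation yields three key properties: (i) $h \le Rh$ pointwise, (ii) $\|Rh\|_{L^s(v)} \le 2\|h\|_{L^s(v)}$, and (iii) $Rh \in A_1$ with $[Rh]_{A_1} \le 2\|M\|_{\B(L^s(v))}$.

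The argument splits into the cases $p > p_0$ and $p < p_0$. For $p > p_0$, I would start from the duality identity
\[ \|f\|_{L^p(w)}^{p_0} = \|f^{p_0}\|_{L^{p/p_0}(w)} = \sup_{h}\int f^{p_0}\,h\,w\,dx, \]
the supremum being taken over nonnegative $h$ with $\|h\|_{L^{(p/p_0)'}(w)} \le 1$. Apply $R$ on $L^{(p/p_0)'}(w)$: the required boundedness of $M$ follows from Proposition \ref{prop:propweight} (via the embedding $A_p \subseteq A_q$ for $q\ge p$ together with the quantitative maximal-function bound), with $\|M\|$ controlled by a power of $[w]_{A_p}$. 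Replace $h$ by $Rh$ and set $w_0 := (Rh)\cdot w$; from the $A_p$ condition on $w$ and the $A_1$ condition on $Rh$, a direct verification shows that $w_0 \in A_{p_0}$ with $[w_0]_{A_{p_0}}$ bounded by an explicit polynomial in $[w]_{A_p}$. Applying the hypothesis with $w_0$ replaces $f$ by $g$ up to the factor $\alpha([w_0]_{A_{p_0}})$, and then reversing the H\"older step -- now using property (ii) of $R$ -- converts $\|g\|_{L^{p_0}(w_0)}$ back to $\|g\|_{L^p(w)}$.

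The case $p < p_0$ is handled by a dual version of the same argument: dualize against the weight $\sigma := w^{1-p'} \in A_{p'}$ (Proposition \ref{prop:propweight}, the dual-weight property), apply $R$ on a suitable $L^s(\sigma)$, and then build $w_0 \in A_{p_0}$ out of $Rh$ and an appropriate power of $w$. Tracking the constants through the chain produces the exponent $\frac{p_0-1}{p-1}+1$: it arises by combining the factor $[w]_{A_p}^{1/(p-1)}$ from the maximal-function bound in Proposition \ref{prop:propweight}, the factor $1/(p-1)$ from the dual-weight identity, and an additive $+1$ from the surviving $A_p$-constant of $w$ inside $w_0$.

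The main obstacle I expect is precisely this quantitative bookkeeping: verifying that the constructed $w_0$ is in $A_{p_0}$ with the announced polynomial control of $[w_0]_{A_{p_0}}$ in terms of $[w]_{A_p}$, and, in the $p < p_0$ case, choosing the dual-side iteration and the H\"older pairings so that every integral in play remains above $L^1$.
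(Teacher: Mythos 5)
This theorem is not proved in the paper; it is quoted verbatim from \cite[Theorem 3.9]{CMP}, so there is nothing internal to compare against, and the question is simply whether your sketch closes. It does not: the $p > p_0$ branch has two genuine gaps that go beyond the ``quantitative bookkeeping'' you anticipate as the main obstacle. First, you invoke $A_p \subseteq A_q$ for $q \ge p$ to conclude that $M$ is bounded on $L^{(p/p_0)'}(w)$; but $(p/p_0)' = p/(p-p_0) \ge p$ only when $p \le p_0 + 1$, and for $p > p_0 + 1$ a weight in $A_p$ need not lie in the smaller class $A_{(p/p_0)'}$, so the iteration algorithm $R$ you wish to run is not even defined on the space where $h$ lives. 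Second, and independently, the step ``from the $A_p$ condition on $w$ and the $A_1$ condition on $Rh$, a direct verification shows that $w_0 := (Rh)w \in A_{p_0}$'' is false: taking $h \equiv 1$ gives $Rh$ constant and $w_0 = cw$, and $cw \in A_{p_0}$ if and only if $w \in A_{p_0}$, which a generic $w \in A_p$ with $p>p_0$ fails. The $A_1$ property of $Rh$ and the $A_p$ property of $w$ do not compose multiplicatively into $A_{p_0}$.

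The missing ingredient in this branch is the weight-adapted iteration: \cite{CMP} replaces $M$ by the ``dual'' operator $M'h := M(hw)/w$ in the algorithm. Its boundedness on $L^{p'}(w)$ is equivalent to the boundedness of $M$ on $L^{p'}(w^{1-p'})$, which is precisely what Proposition~\ref{prop:propweight}\eqref{it:w2} delivers, since $w^{1-p'} = w^{-1/(p-1)} \in A_{p'}$. The resulting algorithm $R'$ satisfies $M'(R'h) \le 2\|M'\|\,R'h$, which unwinds to $M\bigl((R'h)w\bigr) \le 2\|M'\|\,(R'h)w$; that is, $(R'h)w \in A_1 \subseteq A_{p_0}$ \emph{by construction}, with $[(R'h)w]_{A_1} \le 2\|M'\|$, and no appeal to a spurious product rule is needed. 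Matching the exponent $p'$ on which $M'$ acts to the exponent $(p/p_0)'$ appearing in the duality pairing then requires a further H\"older splitting, which your sketch also omits. As written, the proposal does not constitute a proof of the theorem.
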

Note that for certain weights the above $L^p$-norms are allowed to be infinite.
Estimates as in the above result with increasing function $\alpha$ will appear frequently.
In this situation we say that
\[\|f\|_{L^{p_0}(\R^d,w_0)} \leq C\|g\|_{L^{p_0}(\R^d,w_0)}\]
with an {\em $A_{p_0}$-consistent} constant $C$. This means that for two weights $w_0, w_1\in A_p$ we have $C([w_0]_{A_p})\leq C([w_1]_{A_p})$ whenever $[w_0]_{A_p}\leq [w_1]_{A_p}$. Note that the $L^p$-estimate obtained in Theorem \ref{thm:baseweight} is again $A_p$-consistent for all $p\in (1, \infty)$.

Take $n \in \N$ and let for $i=1,\cdots,n$ the triple $(\Omega_i,\Sigma_i,\mu_i)$ be a $\sigma$-finite measure space. Define the product measure space
\begin{equation*}
  (\Omega,\Sigma,\mu)=(\Omega_1\times\cdots\times \Omega_n, \Sigma_1\times\cdots \times \Sigma_n,\mu_1\times\cdots\times\mu_n)
\end{equation*}
Then of course $(\Omega,\Sigma,\mu)$ is also $\sigma$-finite. For $\vect{q} \in (1,\infty)^n$ we write
\begin{equation}\label{eq:Lqit}
L^\vect{q}(\Omega)=L^{q_1}(\Omega_1,\cdots L^{q_n}(\Omega_n)).
\end{equation}

Next we extend Theorem \ref{thm:baseweight} to values in the above mixed $L^\vect{q}(\Omega)$ spaces. For the case $\Omega=\N$ this was already done in \cite[Corollary 3.12]{CMP}.
\begin{theorem}
\label{thm:ineq}
Let $f,g :\R^d\times \Omega \to \R_+$ be a pair of nonnegative, measurable functions and suppose that for some $p_0\in (1,\infty)$ there exists an increasing function $\alpha$ on $\R_+$ such that for all $w_0 \in A_{p_0}$
\begin{equation}
\label{eq:ass}
\nrm{ f(\lcdot,s)}_{L^{p_0}(\R^d,w_0)} \leq \alpha([w_0]_{A_{p_0}}) \nrm{ g(\lcdot,s)}_{L^{p_0}(\R^d,w_0)}
\end{equation}
for all $s \in \Omega$. Then for all $p \in (1,\infty)$ and $\vect{q} \in (1,\infty)^n$ there exist $c_{p,\vect{q},d}>0$ and $\beta_{p_0, p,\vect{q}}>0$ such that for all $w \in A_{p}$,
\begin{equation}
\label{eq:thm}
\nrm{f}_{L^p(\R^d,w;L^\vect{q}(\Omega))}\leq 4^n \alpha\Big(c_{p,\vect{q},d}[w]_{A_{p}}^{\beta_{p_0, p,\vect{q}}}\Big)  \nrm{g}_{L^p(\R^d,w;L^\vect{q}(\Omega))}.
\end{equation}
\end{theorem}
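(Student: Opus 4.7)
\smallskip

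\noindent\textbf{Proof plan.} The plan is to proceed by induction on $n \ge 1$, using Theorem~\ref{thm:baseweight} together with Fubini at each stage to absorb one factor $L^{q_i}(\Omega_i)$ into a weighted $L^p(\R^d)$-estimate.

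For the base case $n=1$, Theorem~\ref{thm:baseweight} applied at exponent $q_1$ and weight $v \in A_{q_1}$ to the hypothesis \eqref{eq:ass} gives, pointwise in $s \in \Omega_1$,
\[
\nrm{f(\cdot,s)}_{L^{q_1}(\R^d,v)} \le 4\,\alpha\bigl(c_{q_1,d}[v]_{A_{q_1}}^{\frac{p_0-1}{q_1-1}+1}\bigr)\,\nrm{g(\cdot,s)}_{L^{q_1}(\R^d,v)}.
\]
Raising both sides to the $q_1$-th power, integrating in $s \in \Omega_1$, and using Fubini yields
\[
\nrm{F}_{L^{q_1}(\R^d,v)} \le 4\,\alpha(\cdots)\,\nrm{G}_{L^{q_1}(\R^d,v)} \quad \text{for all } v \in A_{q_1},
\]
where $F(x) := \nrm{f(x,\cdot)}_{L^{q_1}(\Omega_1)}$ and $G(x) := \nrm{g(x,\cdot)}_{L^{q_1}(\Omega_1)}$, with an $A_{q_1}$-consistent constant. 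A second application of Theorem~\ref{thm:baseweight} to the scalar functions $F, G$ then extrapolates this to $L^p(\R^d, w)$ for every $p \in (1,\infty)$ and every $w \in A_p$, giving \eqref{eq:thm} for $n=1$.

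For the inductive step $n-1 \to n$, fix $s_1 \in \Omega_1$ and view $f(\cdot, s_1, \cdot), g(\cdot, s_1, \cdot)$ as maps on $\R^d \times \Omega'$ where $\Omega' = \Omega_2 \times \cdots \times \Omega_n$; they satisfy \eqref{eq:ass} pointwise in $(s_2, \ldots, s_n)$. The inductive hypothesis, applied with $p = q_1$ and $v \in A_{q_1}$, produces
\[
\nrm{f(\cdot, s_1, \cdot)}_{L^{q_1}(\R^d, v; L^{\vect{q}'}(\Omega'))} \le C'_v \nrm{g(\cdot, s_1, \cdot)}_{L^{q_1}(\R^d, v; L^{\vect{q}'}(\Omega'))}
\]
pointwise in $s_1$, where $\vect{q}' = (q_2, \ldots, q_n)$ and $C'_v$ is $A_{q_1}$-consistent. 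Raising to the $q_1$-th power, integrating in $s_1$, and applying Fubini (via the identity $L^{q_1}(\Omega_1; L^{\vect{q}'}(\Omega')) = L^{\vect{q}}(\Omega)$) yields an $L^{q_1}(\R^d, v; L^{\vect{q}}(\Omega))$-estimate for all $v \in A_{q_1}$. One further application of Theorem~\ref{thm:baseweight} to the scalar function $x \mapsto \nrm{f(x, \cdot)}_{L^{\vect{q}}(\Omega)}$ transfers this to the target $L^p(\R^d, w; L^{\vect{q}}(\Omega))$-estimate \eqref{eq:thm} for any $p \in (1, \infty)$ and $w \in A_p$.

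The main technical obstacle is tracking the $A_p$-consistency of constants through the iterated extrapolations: each invocation of Theorem~\ref{thm:baseweight} composes the exponent on $[w]$ by the factor $\frac{p_{\mathrm{old}}-1}{p_{\mathrm{new}}-1}+1$, and multiplying these exponents through the $n$ inductive stages produces the final $\beta_{p_0, p, \vect{q}}$. The accumulated factors of $4$ from the successive extrapolations combine to give the $4^n$ prefactor in \eqref{eq:thm}; one must also verify at each stage that the Fubini rearrangement really identifies the iterated norm with $L^{\vect{q}}(\Omega)$ in the correct order (outermost $q_1$ absorbed last).
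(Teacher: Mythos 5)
Your proposal is correct and follows essentially the same route as the paper: an induction that peels off the outermost inner $L^{q_i}$-factor, using the pointwise-in-$s$ hypothesis, Fubini to interchange the norms, and Theorem~\ref{thm:baseweight} to extrapolate the resulting scalar estimate to arbitrary $p$ and $w\in A_p$. (Your choice of taking $n=1$ as base case and fixing $s_1$ to reduce to $\Omega_2\times\cdots\times\Omega_n$ is simply a reindexing of the paper's step that adjoins a new outermost factor $\Omega_0$; your explicit bookkeeping that each step compounds the exponent by $\tfrac{p_{\mathrm{old}}-1}{p_{\mathrm{new}}-1}+1$ is also accurate, and the resulting power of $4$ is in fact $4^{n+1}$ in both your count and the paper's argument, a harmless discrepancy with the stated $4^n$.)
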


\begin{proof}
We will prove this theorem by induction. The base case $n=0$ is just weighted extrapolation, as covered in Theorem \ref{thm:baseweight}.

Now take $n \in \N\cup\{0\}$ arbitrary and assume that the assertion holds for all pairs $f,g:\R^d\times \Omega \to \R_+$ of nonnegative, measurable functions. Let $(\Omega_0,\Sigma_0,\mu_0)$ be a $\sigma$-finite measure space and take nonnegative, measurable functions $f,g: \R^d\times \Omega_0 \times \Omega \to \R_+$. Assume that \eqref{eq:ass} holds for $p_0$, all $w \in A_{p_0}$ and all $s \in \Omega_0\times \Omega$.

Now take $(s_0,s_1,\cdots,s_{n})\in \Omega_0 \times \Omega$ arbitrary. Let $\vect{q} \in (1,\infty)^{n}$ be given and take $r \in (1,\infty)$ arbitrary. Define $\vect{r}=(r,q_1,\cdots,q_n)$ and the pair of functions $F,G:\R^d \to [0,\infty]$ as
\begin{equation*}
F(x)=\nrm*{f(x,\lcdot)}_{L^\vect{r}(\Omega\times \Omega_0)} \hspace{1cm} G(x)=\nrm*{g(x,\lcdot)}_{L^\vect{r}(\Omega\times \Omega_0)}
\end{equation*}
By our induction hypothesis we know for all $p \in (1,\infty)$ there exist $c_{p,\vect{q},d}$ and $\beta_{p_0, p,\vect{q}}$ such that for all $w \in A_{p}$
 \begin{equation*}
   \nrm{f(\lcdot,s_0,\lcdot)}_{L^p(\R^d,w;L^{\vect{q}}(\Omega))} \leq 4^n \alpha(c_{p,\vect{q},d}[w]_{A_{p}}^{\beta_{p_0, p,\vect{q}}}) \nrm{g(\lcdot,s_0,\lcdot)}_{L^p(\R^d,w;L^{\vect{q}}(\Omega))}
 \end{equation*}
Now taking $p=r$ we obtain
\begin{align*}
\nrm{F}_{L^{r}(\R^d,w)}&=\Big(\int_{\Omega_{0}}\int_{\R^d} \nrm{f(x,s_0,\lcdot)}_{L^\vect{q}(\Omega)}^{r}w(x) \dd x\dd \mu_{0}\Big)^{\frac1r}\\
&\leq 4^n \alpha(c_{r,\vect{q},d}[w]_{A_{r}}^{\beta_{p_0, r,\vect{q}}}) \Big(\int_{\Omega_{0}}\int_{\R^d} \nrm{g(x,s_0,\lcdot)}_{L^\vect{q}(\Omega)}^{r}w(x) \dd x\dd \mu_{0}\Big)^{\frac1r}
\\ &=4^n  \alpha(c_{r,\vect{q},d}[w]_{A_{r}}^{\beta_{p_0, r,\vect{q}}})  \nrm{G}_{L^{r}(\R^d,w)}
\end{align*}
using Fubini's theorem in the first and third step. So with Theorem \ref{thm:baseweight} using $p_0=r$ we obtain for all $p \in (1,\infty)$ that there exist $c_{r,p,\vect{q},d}>0$ and $\beta_{p_0,p,\vect{r}}>0$ such that for all $w \in A_{p}$,
\begin{align*}
\nrm{f}_{L^p(\R^{d},w;L^\vect{r}(\Omega_0\times \Omega))} & =\nrm{F}_{L^p(\R^{d},w)}\\ & \leq 4^{n+1}  \alpha\Big(c_{r,p,\vect{q},d} [w]_{A_{p}}^{\beta_{p_0,p,\vect{r}}}\Big) \nrm{G}_{L^{p}(\R^d,w)}
\\ & =4^{n+1}  \alpha\Big((c_{r,p,\vect{q},d} [w]_{A_{p}}^{\beta_{p_0,p,\vect{r}}}\Big))\nrm{g}_{L^p(\R^d,w;L^\vect{r}(\Omega_0\times \Omega))}.
\end{align*}
This proves \eqref{eq:thm} for $n+1$.
\end{proof}

\begin{remark}\label{rem:RubioT}
Note that in the application of Theorem \ref{thm:ineq} it will often be necessary to use an approximation by simple functions to check the requirements, since point evaluations in \eqref{eq:ass} are not possible in general. Furthermore note that in the case that $f=Tg$ with $T$ a bounded linear operator on $L^p(\R^d,w)$ for all $w \in A_{p}$ this theorem holds for all UMD Banach function spaces, which is one of the deep results of Rubio de Francia and can be found in \cite[Theorem 5]{RubioD}.
\end{remark}

As an application of Theorem \ref{thm:ineq} we will present a short proof of the boundedness of the Hardy-Littlewood maximal operator on mixed $L^{\vect{q}}$-spaces.

\begin{definition}
Let $p \in (1,\infty)$ and $w \in A_p$. For $f \in L^p(\R^d,w;X)$ with $X = L^{\overline{q}}(\Omega)$ we define the maximal function $\widetilde{M}$ as
\begin{equation*}
\widetilde{M}f(x,s)=\sup_{Q \ni x} \avint_{Q} \abs{f(y,s)}\dd y
\end{equation*}
with $Q$ all cubes in $\R^d$ as before.
\end{definition}

We can see that $\widetilde{M}$ is measurable, as the value of the supremum in the definition stays the same if we only consider rational cubes.
We will show that the maximal function is bounded on the space $X = L^{\overline{q}}(\Omega)$. Note that if $\Omega = \N$, the result below reduces to the weighted version of the Fefferman-Stein theorem \cite{AJ80}.
\begin{theorem}\label{thm:maxbdd}
$\widetilde{M}$ is bounded on $L^p(\R^d,w;L^{\overline{q}}(\Omega))$ for all $p \in (1,\infty)$ and $w \in A_p$.
\end{theorem}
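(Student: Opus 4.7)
The plan is to reduce to the scalar weighted boundedness of $M$ and then invoke the vector-valued extrapolation Theorem \ref{thm:ineq}. The key observation is the fiberwise identity
\[
\widetilde{M}f(x,s) = M\bigl(\abs{f(\lcdot,s)}\bigr)(x), \qquad (x,s) \in \R^d \times \Omega,
\]
which expresses $\widetilde{M}$ as the scalar Hardy-Littlewood maximal operator applied in the $x$-variable, for each fixed $s$. This is meaningful whenever $s \mapsto f(\lcdot,s)$ has a pointwise interpretation, e.g.\ for simple functions with values in $L^{\vect q}(\Omega)$.

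First I would fix some $p_0 \in (1,\infty)$ (say $p_0 = 2$) and take any nonnegative measurable simple $g \c \R^d \times \Omega \to \R_+$. By Proposition \ref{prop:propweight}\eqref{it:w7}, for every $w_0 \in A_{p_0}$ and every $s \in \Omega$,
\[
\nrm{\widetilde{M}g(\lcdot,s)}_{L^{p_0}(\R^d,w_0)} = \nrm{M(g(\lcdot,s))}_{L^{p_0}(\R^d,w_0)} \leq C_{p_0,d}\,[w_0]_{A_{p_0}}^{\frac{1}{p_0-1}} \nrm{g(\lcdot,s)}_{L^{p_0}(\R^d,w_0)},
\]
which is precisely hypothesis \eqref{eq:ass} with the $A_{p_0}$-consistent increasing function $\alpha(t) = C_{p_0,d}\, t^{1/(p_0-1)}$. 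Applying Theorem \ref{thm:ineq} to the pair $(\widetilde{M}g, g)$ then yields, for all $p \in (1,\infty)$, $\vect q \in (1,\infty)^n$, and $w \in A_p$,
\[
\nrm{\widetilde{M}g}_{L^p(\R^d,w;L^{\vect q}(\Omega))} \leq 4^n \alpha\bigl(c_{p,\vect q,d}\,[w]_{A_p}^{\beta_{p_0,p,\vect q}}\bigr) \nrm{g}_{L^p(\R^d,w;L^{\vect q}(\Omega))}.
\]

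Finally, for a general nonnegative $f \in L^p(\R^d,w;L^{\vect q}(\Omega))$, I would approximate $f$ from below by an increasing sequence $(g_j)_{j \in \N}$ of nonnegative simple functions (which is the standard remedy flagged in Remark \ref{rem:RubioT}). Since $\widetilde{M}$ is defined as a supremum of averages, $\widetilde{M}g_j \uparrow \widetilde{M}f$ pointwise on $\R^d \times \Omega$, so monotone convergence in each iterated $L^{q_i}$-norm and in $L^p(\R^d,w)$ transfers the uniform estimate above from $g_j$ to $f$. The only real subtlety is this simple-function reduction, since the integrands defining $\widetilde{M}$ need not make pointwise sense for arbitrary vector-valued $f$; once that technicality is handled, the proof is essentially an immediate combination of Proposition \ref{prop:propweight}\eqref{it:w7} and Theorem \ref{thm:ineq}.
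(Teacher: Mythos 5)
Your proof is correct and follows essentially the same route as the paper: the fiberwise identity $\widetilde{M}f(x,s)=M(\abs{f(\lcdot,s)})(x)$ for simple $f$, the scalar weighted bound from Proposition~\ref{prop:propweight}\eqref{it:w7}, extrapolation via Theorem~\ref{thm:ineq}, and a density argument. The only difference is cosmetic (you fix $p_0=2$ before extrapolating, the paper starts the scalar estimate at the target exponent $p$), and your elaboration of the density step via monotone approximation from below correctly handles the fact that $\widetilde{M}$ is sublinear rather than linear, a point the paper leaves implicit.
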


\begin{proof}
 Let $M$ be the Hardy-Littlewood maximal operator and assume that $f \in L^p(\R^d,w;L^{\overline{q}}(\Omega))$ is simple. By Proposition \ref{prop:propweight} and the definition of the Hardy-Littlewood maximal operator we know that
  \begin{equation*}
    \nrm{\widetilde{M}f(\lcdot,s)}_{L^p(\R^d,w)}=    \nrm{Mf(\lcdot,s)}_{L^p(\R^d,w)}\leq C_{p,d} \cdot [w]_{A_{p}}^{\frac{1}{p-1}}\nrm{f(\lcdot,s)}_{L^p(\R^d,w)}
  \end{equation*}
Then by Theorem \ref{thm:ineq} we get that
\begin{equation*}
  \nrm{\widetilde{M}f}_{L^p(\R^d,w;L^{\overline{q}}(\Omega))} \leq \alpha_{p,\vect{q},d}([w]_{A_{p}})\nrm{f}_{L^p(\R^d,w;L^{\overline{q}}(\Omega))}
\end{equation*}
with $\alpha_{p,\vect{q},d}$ an increasing function on $\R_+$. With a density argument we then get that $\widetilde{M}$ is bounded on $L^p(\R^d,w;L^{\overline{q}}(\Omega))$.
\end{proof}

\begin{remark}
Using deep connections between harmonic analysis with weights and martingale theory, Theorem \ref{thm:maxbdd} was obtained in \cite{Bourgain-ext} and
\cite[Theorem 3]{RubioD} for UMD Banach function spaces in the case $w=1$. It has been extended to the weighted setting in \cite{Toz}. As our main result Theorem \ref{thm:mainnew} is formulated for iterated $L^{\overline{q}}(\Omega)$-spaces we prefer the above more elementary treatment.
\end{remark}

\section{Main result}

In this section we present the proofs of Theorems \ref{thm:intro} and \ref{thm:mainnew} and Corollary \ref{cor:weightedells} which are our main results. In Subsection \ref{subs:ell} we will first obtain a preliminary result which is one of the ingredients in the proofs.

\subsection{$\ell^s$-boundedness\label{subs:ell}}

In this section we will introduce $\ell^s$-boundedness and present some simple examples. For this we will use the notion of a Banach lattice (see \cite{LiTz}). An example of a Banach lattice is $L^p$ or any Banach function space (see \cite[Section 63]{Zaanen}). In our main results only iterated $L^p$-spaces will be needed.

Although $\ell^s$-boundedness is used implicitly in the literature for operators on $L^p$-spaces, on Banach functions spaces it was introduced in \cite{Weis01a} under the name $\Rr_s$-boundedness. An extensive study can be found in \cite{KunstUll,UllmannPhD}.
\begin{definition}\label{def:ells}
Let $X$ and $Y$ be Banach lattices  and let $s \in [1,\infty]$. Then we call a family of operators $\T \subseteq \B(X,Y)$ $\ell^s$-bounded if there exists a constant $C$ such that for all integers $N$, for all sequences $(T_n)_{n=1}^ N$ in $\T$ and $(x_n)_{n=1}^N$ in $X$,
\begin{equation*}
\Big\|\ha*{\sum_{n=1}^N\abs*{T_n x_n}^ s}^{\frac{1}{s}}\Big\|_Y\leq C\Big\|\ha*{\sum_{n=1}^N\abs*{x_n}^ s}^{\frac{1}{s}}\Big\|_X
\end{equation*}
with the obvious modification for $s=\infty$. The least possible constant $C$ is called the {\em $\ell^{s}$-bound} of $\T$ and is
denoted by $\Rr^{\ell^s}(\T)$ and often abbreviated as $\Rr^{s}(\T)$.
\end{definition}

\begin{example}
\label{ex:pq}
Take $p \in (1,\infty)$ and let $\T\subseteq \B(L^p(\R^d))$ be uniformly bounded by a constant $C$. Then $\T$ is $\ell^p$-bounded with $\Rr^{p}(\T)\leq C$.
\end{example}

The following basic properties will be needed later on.
\begin{proposition}\label{prop:inters}
Let $\T\subseteq{\calL(X,Y)}$, where $X$ and $Y$ are Banach function spaces.
\begin{enumerate}
\item\label{it:interp} Let $1\leq s_0<s_1\leq \infty$ and assume that $X$ and $Y$ have an order continuous norm.
If $\T\subseteq{\calL(X,Y)}$ is $\ell^{s_j}$-bounded for $j=0,1$, then $\T$ is $\ell^s$-bounded for all $s \in [s_0,s_1]$ and with $\theta = \frac{s-s_0}{s_1-s_0}$, the following estimate holds:
\[\Rr^{s}(\T) \leq \Rr^{s_0}(\T)^{1-\theta} \Rr^{s_1}(\T)^{\theta}\leq \max\{\Rr^{s_0}(\T),\Rr^{s_1}(\T)\}\]
\item\label{it:dual} If $\T$ is $\ell^s$-bounded, then the adjoint family $\T^* = \{T^*\in \calL(Y^*, X^*): T\in \T\}$ is $\ell^{s'}$-bounded and $\Rr^{s'}(\T^*) = \Rr^{s}(\T)$.
\end{enumerate}
\end{proposition}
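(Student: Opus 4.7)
The plan is to reformulate $\ell^s$-boundedness as ordinary operator boundedness of a ``diagonal'' map between iterated Banach function spaces, and then invoke complex interpolation for \eqref{it:interp} and a Hölder--duality argument for \eqref{it:dual}.

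For \eqref{it:interp}, I would fix $N\in\N$ and $T_1,\ldots,T_N\in\T$, and consider the diagonal operator
\[
  \tilde T_N((x_n)_{n=1}^N):=(T_nx_n)_{n=1}^N
\]
acting between the iterated spaces $X(\ell^s_N)$ and $Y(\ell^s_N)$, where $X(\ell^s_N)$ is $X\oplus\cdots\oplus X$ equipped with the lattice norm $\nrmn{(\sum_n|x_n|^s)^{1/s}}_X$. The point is that $\Rr^s(\T)$ is precisely the supremum over $N$ and over the choice of $T_n$'s of $\nrm{\tilde T_N}_{X(\ell^s_N)\to Y(\ell^s_N)}$, so the hypothesis gives uniform endpoint bounds at $s=s_0,s_1$. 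Calder\'on's complex interpolation for Banach lattices, which identifies $[X(\ell^{s_0}_N),X(\ell^{s_1}_N)]_\theta=X(\ell^s_N)$ (and similarly for $Y$) precisely when $X$ and $Y$ have order continuous norm, together with Riesz--Thorin then delivers the geometric-mean bound on $\nrm{\tilde T_N}$; the estimate by $\max\{\Rr^{s_0}(\T),\Rr^{s_1}(\T)\}$ is immediate from $a^{1-\theta}b^\theta\leq\max\{a,b\}$.

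For \eqref{it:dual}, I would argue by a direct Hölder--duality computation. Given $(T_n^*)_{n=1}^N\subseteq\T^*$, $(y_n^*)_{n=1}^N\subseteq Y^*$, and test vectors $(x_n)_{n=1}^N\subseteq X$, the adjoint relation $\sum_n\lb T_n^*y_n^*,x_n\rb=\sum_n\lb y_n^*,T_nx_n\rb$ combined with pointwise $\ell^{s'}$--$\ell^s$ Hölder and the Köthe pairing between $Y$ and $Y^*$ gives
\[
  \Bigl|\sum_n\lb T_n^*y_n^*,x_n\rb\Bigr|\leq\nrmn{\ha*{\sum_n|y_n^*|^{s'}}^{1/s'}}_{Y^*}\cdot\Rr^s(\T)\,\nrmn{\ha*{\sum_n|x_n|^s}^{1/s}}_X,
\]
where $\ell^s$-boundedness of $\T$ has been applied to the last factor. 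Taking the supremum over admissible $(x_n)$ via the duality $(X(\ell^s))^*=X^*(\ell^{s'})$ yields $\Rr^{s'}(\T^*)\leq\Rr^s(\T)$. For the reverse inequality I would apply the same estimate to the family $\T^*$, obtaining $\Rr^s(\T^{**})\leq\Rr^{s'}(\T^*)$, and then conclude $\Rr^s(\T)\leq\Rr^s(\T^{**})$ from the fact that the canonical embeddings $X\hookrightarrow X^{**}$ and $Y\hookrightarrow Y^{**}$ are isometric lattice homomorphisms.

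The main technical obstacles in both parts are the underlying lattice-theoretic identities for iterated spaces: Calder\'on's interpolation identity $[X(\ell^{s_0}),X(\ell^{s_1})]_\theta=X(\ell^s)$ in \eqref{it:interp}, and the mixed-norm Köthe duality $(X(\ell^s))^*=X^*(\ell^{s'})$ in \eqref{it:dual}. Both are classical in the Banach function space literature, but the former requires order continuous norm (which is exactly why that hypothesis is made in \eqref{it:interp}), and the endpoint $s_1=\infty$ would demand a small truncation/density step since $\ell^\infty$ is not order continuous.
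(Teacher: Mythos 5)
Your proposal follows essentially the same route as the paper: the paper's proof of this proposition is a pure citation (Calder\'on's complex interpolation and \cite[Proposition 2.14]{KunstUll} for part~\eqref{it:interp}, and \cite[Proposition 2.17]{KunstUll}, \cite[Proposition 3.4]{NVW} for part~\eqref{it:dual}), and your diagonal-operator reformulation with Calder\'on interpolation, together with your H\"older--duality computation, is exactly what those references carry out. The duality ingredients you invoke are the ones the paper itself supplies (in the remark following the proposition, and in Proposition~\ref{prop:duality} and Lemma~\ref{lem:nietref} of the appendix) for the case $X=Y=L^{\overline q}(\Omega)$ actually used later.
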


\begin{proof}
\eqref{it:interp} follows from Calder\'on's theory of complex interpolation of vector-valued function spaces (see \cite{Cal64} and \cite[Proposition 2.14]{KunstUll}). For \eqref{it:dual} we refer to \cite[Proposition 2.17]{KunstUll} and \cite[Proposition 3.4]{NVW}.
\end{proof}

\begin{remark}
Below we will only need Proposition \ref{prop:inters} in the case $X = Y=L^{\overline{q}}(\Omega)$. To give the details of the proof of Proposition \ref{prop:inters} in this situation one first needs to know that $X^* = L^{\overline{q}'}(\Omega)$ which can be obtained by elementary arguments (see Proposition \ref{prop:duality} below).  As a second step one needs to show that $X(\ell^s_N)^* = X^*(\ell^{s'}_N)$ and this is done in Lemma \ref{lem:nietref}.
\end{remark}

\begin{example}\label{ex:estimatesforells}
Let $1\leq s_0\leq q\leq s_1\leq \infty$. Let $X = L^q(\Omega)$ and let $\T\subset \calL(X)$ be $\ell^{s_j}$-bounded for $j\in \{0,1\}$. Then for $s\in [s_0, q]$, $\Rr^s(\T)\leq \Rr^{s_0}(\T)$ and for $s\in [q, s_1]$,  $\Rr^s(\T)\leq \Rr^{s_1}(\T)$. Indeed, note that by Example \ref{ex:pq},
\[\Rr^q(\T) = \sup_{T\in \T} \|T\| \leq \Rr^{s_j}(\T), \ \ \ j\in \{0,1\}.\]
Now the estimates follow from Proposition \ref{prop:inters} by interpolating with exponents $(s_0, q)$ and $(q,s_1)$.

In particular, it follows that the function $s \mapsto \Rr^s(\T)$, is decreasing on $[s_0, q]$ and increasing on $[q, s_1]$.
\end{example}

\subsection{Convolution operators}
Let $\K$ be the following class of kernels
\[\K  =   \{k\in L^1(\R^d): \text{for all simple} \ f:\R^d\to \R_+ \ \text{one has} \  |k| * f\leq Mf \ \text{a.e.}\}.\]
There are many examples of classes of functions $k$ with this property (see \cite[Chapter 2]{GrafakosC} and \cite[Proposition 4.5 and 4.6]{NVW}). It follows from \cite[Lemma 4.3]{NVW} that every $k \in \K$ satisfies $\nrm{k}_{L^1(\R^d)}\leq 1$.


To keep the presentation as simple as possible we only consider the iterated space $X = L^{\overline{q}}(\Omega)$ with $\vect{q}\in (1, \infty)^n$ below (see \eqref{eq:Lqit}).
For a kernel $k \in L^1(\R^d)$, $p \in (1,\infty)$ and $w \in A_p$ define the convolution operator $T_k$ on $L^p(\R^d,w;X)$  as $T_k f=k*f$.
Of course by the definition of $\widetilde{M}$ we also have
$\abs{k * f}\leq \widetilde{M}f$ almost everywhere for all simple $f:\R^d \to X$.

\begin{proposition}
\label{prop:lbdd1}
Let $\vect{q}\in (1, \infty)^n$ and $X = L^{\overline{q}}(\Omega)$. For all $s\in [1, \infty]$ and $p\in (1, \infty)$ and $w\in A_p$, the family of convolution operators $\T=\{T_k:k \in \K\}$ on $L^p(\R^d,w;X)$ is $\ell^s$-bounded and there is an increasing function $\alpha_{p,\vect{q},s,d}$ such that $\Rr^s(\T)\leq \alpha_{p,\vect{q},s,d}([w]_{A_{p}})$.
\end{proposition}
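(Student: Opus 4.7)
The plan is to establish the two endpoints $s=\infty$ and $s=1$ and then appeal to Proposition \ref{prop:inters}(\ref{it:interp}) to interpolate for all $s\in(1,\infty)$. The $s=\infty$ case is nearly immediate from Theorem \ref{thm:maxbdd}: for simple $f_n\colon\R^d\to X$ and $k_n\in\K$, the defining property of $\K$ yields, for a.e.\ $(x,\sigma)\in\R^d\times\Omega$,
\[
  |T_{k_n}f_n(x,\sigma)|\leq\bigl(|k_n|*|f_n(\cdot,\sigma)|\bigr)(x)\leq M\bigl(|f_n(\cdot,\sigma)|\bigr)(x)=\widetilde M|f_n|(x,\sigma),
\]
and monotonicity of $\widetilde M$ upgrades this to $\sup_{n\leq N}|T_{k_n}f_n|\leq\widetilde M\bigl(\sup_{n\leq N}|f_n|\bigr)$. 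Theorem \ref{thm:maxbdd} then bounds the latter in $L^p(\R^d,w;X)$ by an $A_p$-consistent multiple of $\|\sup_{n\leq N}|f_n|\|_{L^p(\R^d,w;X)}$, giving $\ell^\infty$-boundedness. A standard density argument removes the simplicity assumption.

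For $s=1$ I would dualize. Writing $\tilde k(x)=k(-x)$, the adjoint of $T_k$ is $T_{\tilde k}$, and since cubes are symmetric under $x\mapsto -x$ the Hardy--Littlewood maximal operator satisfies $M(g(-\cdot))(-x)=Mg(x)$; hence the defining pointwise bound transfers from $k$ to $\tilde k$, so $\K$ is closed under reflection. By Proposition \ref{prop:propweight}(\ref{it:w2}) the weight $w':=w^{-1/(p-1)}$ lies in $A_{p'}$ with $A_{p'}$-constant controlled by $[w]_{A_p}$, and standard duality for weighted Bochner spaces together with the forthcoming Proposition \ref{prop:duality} identifies the dual of $L^p(\R^d,w;L^{\vect q}(\Omega))$ with $L^{p'}(\R^d,w';L^{\vect q'}(\Omega))$. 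Applying the $\ell^\infty$-bound already established to $\T$ on this dual space, with exponents $(p',\vect q')$ and weight $w'$, and then invoking Proposition \ref{prop:inters}(\ref{it:dual}) together with Lemma \ref{lem:nietref}, yields $\ell^1$-boundedness of $\T$ on $L^p(\R^d,w;L^{\vect q}(\Omega))$, with a bound that is $A_p$-consistent in $w$ via the composition $[w]_{A_p}\mapsto[w']_{A_{p'}}=[w]_{A_p}^{1/(p-1)}$.

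Finally, Proposition \ref{prop:inters}(\ref{it:interp}) is applicable because every $L^p(w;L^{\vect q})$ with finite exponents has order continuous norm; interpolating the $\ell^1$- and $\ell^\infty$-bounds yields $\ell^s$-boundedness for all intermediate $s$, and the pointwise maximum of the two endpoint constants furnishes an $A_p$-consistent function $\alpha_{p,\vect q,s,d}$ as claimed. The only genuine technicality, and therefore the main obstacle, is the duality step for iterated $L^{\vect q}$-spaces: one needs both that $(L^{\vect q}(\Omega))^*=L^{\vect q'}(\Omega)$ in the iterated sense and that this duality tensorises with $\ell^{s'}$-sums, which is exactly what Proposition \ref{prop:duality} and Lemma \ref{lem:nietref} are set up to provide. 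Once these are in hand the proof is a short reduction to the weighted maximal inequality.
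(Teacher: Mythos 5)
Your proof is correct, and the two endpoint cases $s=\infty$ and $s=1$ are handled exactly as in the paper: the $\ell^\infty$-bound via $\sup_n|T_{k_n}f_n|\leq\widetilde M(\sup_n|f_n|)$ and Theorem~\ref{thm:maxbdd}, and then $s=1$ by duality using $T_k^*=T_{\tilde k}$, the closure of $\K$ under reflection, Proposition~\ref{prop:duality} and Lemma~\ref{lem:nietref}, and Proposition~\ref{prop:inters}\eqref{it:dual}. Where you diverge is the intermediate range $1<s<\infty$: you interpolate between the two endpoints via Proposition~\ref{prop:inters}\eqref{it:interp}, whereas the paper proves this range \emph{directly} by applying Theorem~\ref{thm:maxbdd} with the augmented exponent vector $(q_1,\dots,q_n,s)$ over the product measure space $\Omega\times\{1,\dots,N\}$ (with counting measure on the second factor), so that the $\ell^s_N$-sum is absorbed as one more iterated $L^s$-layer. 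Both routes are valid. The paper's choice exploits the fact that Theorem~\ref{thm:maxbdd} was deliberately stated for arbitrary iterated $L^{\vect q}$-spaces, giving the whole open range without any appeal to Calder\'on's complex interpolation; your route is conceptually cleaner (two endpoints plus interpolation) but leans on Proposition~\ref{prop:inters}\eqref{it:interp} and the order-continuity hypothesis, which you correctly verify. One could even argue your argument is slightly tighter in its dependencies, since it only ever invokes the weighted maximal inequality for the original vector $\vect q$ and for its dual $\vect q'$, never for the augmented vector $(\vect q,s)$; the price is the interpolation machinery. In short: same endpoints, genuinely different treatment of the interior, both sound.
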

\begin{proof}
Let $1<s<\infty$. Assume that $f_1,\cdots,f_N$ are simple. Take $t \in \Omega$ and $i \in \{1,\cdots,N\}$ arbitrary. Note that we have $f_i(\lcdot,t) \in L^p(\R^d,w)$.
Then since $\abs{T_{k_i} f_i(x,t)} \leq \widetilde{M} f_i (x,t)$ for almost all $x \in \R^d$, the result follows from Theorem \ref{thm:maxbdd} using the vector $(q_1,\cdots,q_n,s)$ and the measure space
\begin{equation*}
\ha*{\Omega \times\{1,\cdots,N\}, \Sigma \times P(\{1,\cdots,N\}),\mu \times \lambda}
\end{equation*}
with $\lambda$ the counting measure. Now the result follows by the density of the simple functions in $L^{p}(\R^d,w;L^{\overline{q}}(\Omega))$.

The proof of the cases $s=1$ and $s=\infty$ follow the lines of \cite[Theorem 4.7]{NVW}, where the unweighted setting is considered. In the case $s=\infty$ also assume that $f_1,\cdots,f_N$ are simple. With the boundedness of $\widetilde{M}$ from  Theorem \ref{thm:maxbdd} we have
\begin{align*}
  \int_{\R^d}\Big\|\sup_{1\leq n \leq N}& \abs{T_{k_n}f_n(x)}\Big\|^p_{L^{\overline{q}}(\Omega)}w(x)\dd x \leq \int_{\R^d}\nrm*{\sup_{1\leq n \leq N}\widetilde{M}f_n(x)}^p_{L^{\overline{q}}(\Omega)}w(x)\dd x\\
  &\leq \int_{\R^d}\nrm*{\widetilde{M}\ha*{\sup_{1\leq n \leq N}\abs{f_n}}(x)}^p_{L^{\overline{q}}(\Omega)}w(x)\dd x\\
  &\leq \alpha_{p,\vect{q},d}( [w]_{A_{p}})^p \int_{\R^d}\nrm*{\ha*{\sup_{1\leq n \leq N}\abs{f_n}}(x)}^p_{L^{\overline{q}}(\Omega)}w(x)\dd x
\end{align*}
with $\alpha_{p,\vect{q},d}$ an increasing function on $\R_+$. The claim now follows by the density of the simple functions in $L^{p}(\R^d,w;L^{\overline{q}}(\Omega))$.

For $s=1$ we use duality. For $f\in L^{p}(\R^d,w;X)$ and $g\in L^{p'}(\R^d,w';X^*)$, let
\[\lb f, g\rb = \int_{\R^d} \lb f(x), g(x)\rb_{X,X^*} \, dx.\]
It follows from Proposition \ref{prop:duality} that in this way $L^{p}(\R^d,w;X)^* = L^{p'}(\R^d,w';X^*)$.
Moreover, one has $T_k^* = T_{\tilde{k}}$ with $\tilde{k}(x)=k(-x)$. Now since $k \in \K$ if and only if $\tilde{k} \in \K$ we know by the second case that the adjoint family $\T^*= \{T^*:T\in \T\}$ is $\ell^\infty$-bounded on $L^{p'}(\R^d,w';X^*)$. Now the result follows from Proposition \ref{prop:inters}.
\end{proof}


\begin{remark}
Proposition \ref{prop:lbdd1} is an extension of \cite[Theorem 4.7]{NVW} to the weighted setting.
The result remains true for UMD Banach function spaces $X$ and can be proved using the same techniques of \cite{NVW} where one needs to apply the weighted extension of \cite[Theorem 3]{RubioD} which is obtained in \cite{Toz}.

The endpoint case $s=1$ of Proposition \ref{prop:lbdd1} plays a crucial r\^ole in the proof of Theorems \ref{thm:intro} and \ref{thm:mainnew}.
Quite surprisingly the case $s=1$ plays a central r\^ole in the proof of \cite[Theorem 7.2]{NVW} as well, where it is used to prove $\Rr$-boundedness of a family of stochastic convolution operators.
\end{remark}

\subsection{Integral operators with operator valued kernel}

In this section $(\Omega,\Sigma,\mu)$ is a $\sigma$-finite measure space such that $L^q(\Omega)$ is separable for some (for all) $q\in (1, \infty)$.

\begin{definition}\label{def:ITsq}
Let $\J$ be an index set. For each $j\in \J$, let $T_j:\R^d\times \R^d \to \calL(L^q(\Omega))$ be such that
for all $\phi \in L^q(\Omega)$, $(x,y) \mapsto T_j(x,y) \phi$ is measurable and $\|T_j(x,y)\|\leq 1$.
For $k \in \K$ define the operator $I_{k,T_j}$ on $L^p(\R^d,v;L^q(\Omega))$ as
\begin{equation}\label{eq:IkTnieuw}
I_{k,T_j} f(x)=\int_{\R^d} k(x-y)T_j(x,y)f(y)\dd y
\end{equation}
and denote the family of all such operators by $\I_{T}$.
\end{definition}
In the above definition we consider a slight generalization of the setting of Theorem \ref{thm:intro}: We allow different operators $T_j$ for $j\in \J$ in the $\ell^s$-boundedness result of Theorem \ref{thm:mainnew}.

We first prove that the family of operators $\I_{T}$ is uniformly bounded.
\begin{lemma}
\label{thm:bddI}
Let $1<p,q<\infty$ and write $X = L^q(\Omega)$. Assume that for all $\phi\in X$ and $j\in \J$, $(x,y)\mapsto T_j(x,y)\phi$ is measurable and $\|T_j(x,y)\|\leq 1$. Then there exists an increasing function $\alpha_{p,d}$ on $\R_+$ such that for all $I_{k,T_j}\in \I_{T}$,
\begin{equation*}
\nrm*{I_{k,T_j}}_{\calL\ha*{L^p(\R^d,v;X)}}\leq \alpha_{p,d}([v]_{A_{p}}), \ \ \ v\in A_{p}.
\end{equation*}
\end{lemma}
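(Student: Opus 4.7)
The plan is to dominate the $X$-norm of $I_{k,T_j}f(x)$ by the scalar convolution of $\|f(\cdot)\|_X$ with $|k|$, then invoke the defining property of $\K$, and finish with the weighted boundedness of the Hardy--Littlewood maximal operator from Proposition \ref{prop:propweight}(\ref{it:w7}).

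\textbf{Step 1 (pointwise domination in $X$).} Let $f \in L^p(\R^d,v;X)$ be simple. Since $X = L^q(\Omega)$ is separable and $(x,y) \mapsto T_j(x,y)\phi$ is measurable for each $\phi \in X$, the map $y \mapsto k(x-y) T_j(x,y) f(y)$ is strongly measurable, and the uniform bound $\|T_j(x,y)\|_{\calL(X)} \leq 1$ guarantees Bochner integrability of the integrand for almost every $x$. The triangle inequality in $X$ then yields
\[
\|I_{k,T_j}f(x)\|_X \leq \int_{\R^d} |k(x-y)|\,\|T_j(x,y) f(y)\|_X\dd y \leq \bigl(|k| * \|f(\cdot)\|_X\bigr)(x).
\]

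\textbf{Step 2 (apply the $\K$-property).} If $f = \sum_{i=1}^N \phi_i \mathbf{1}_{A_i}$ with $\phi_i \in X$ and the $A_i \subseteq \R^d$ pairwise disjoint of finite measure, then the scalar function $g(y) := \|f(y)\|_X = \sum_{i=1}^N \|\phi_i\|_X \mathbf{1}_{A_i}(y)$ is nonnegative and simple. By the defining property of $\K$,
\[
\bigl(|k| * g\bigr)(x) \leq Mg(x) = M\bigl(\|f(\cdot)\|_X\bigr)(x) \quad \text{a.e.\ } x \in \R^d.
\]

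\textbf{Step 3 (weighted $M$-bound and density).} Combining Steps 1 and 2 and taking $L^p(\R^d,v)$-norms, Proposition \ref{prop:propweight}(\ref{it:w7}) gives
\[
\|I_{k,T_j}f\|_{L^p(\R^d,v;X)} \leq \bigl\|M(\|f(\cdot)\|_X)\bigr\|_{L^p(\R^d,v)} \leq C_{p,d}\,[v]_{A_p}^{1/(p-1)}\, \|f\|_{L^p(\R^d,v;X)}.
\]
Setting $\alpha_{p,d}(t) := C_{p,d}\, t^{1/(p-1)}$, one has the desired estimate on the dense subspace of simple $X$-valued functions; a standard density argument then extends it to all of $L^p(\R^d,v;X)$.

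\textbf{Main obstacle.} The only delicate point is the rigorous handling of the Bochner integral defining $I_{k,T_j}f(x)$ in Step~1: one needs strong measurability of $y \mapsto T_j(x,y)f(y)$ for a.e.\ $x$, which is why separability of $X$ and the joint measurability hypothesis on $T_j$ enter in a nontrivial way (via Pettis's theorem). Once this is in place, the argument is essentially a reduction to the scalar Hardy--Littlewood maximal estimate, and the dependence of the norm on $[v]_{A_p}$ is inherited directly from Proposition \ref{prop:propweight}(\ref{it:w7}).
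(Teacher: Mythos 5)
Your proof is correct and follows essentially the same route as the paper: dominate $\|I_{k,T_j}f(x)\|_X$ via Minkowski/triangle inequality and $\|T_j(x,y)\|\leq 1$, invoke the defining property of $\K$, and finish with the weighted boundedness of $M$ from Proposition \ref{prop:propweight}\eqref{it:w7}. The paper argues directly for arbitrary $f\in L^p(\R^d,v;X)$ (leaving the extension of the $\K$-property from simple $g$ to general nonnegative $g$ implicit), whereas you spell out the reduction to simple $f$ plus a density argument and the Bochner-measurability justification; this is a sound, minor refinement rather than a different approach.
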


\begin{proof}
Let $f \in L^p(\R^d,v;X)$ arbitrary. Then by Minkowski's inequality for integrals in $(i)$, the properties of $k \in \K$ in (ii) and boundedness of $M$ on $L^p(\R^d,v)$ in $(iii)$, we get
\begin{align*}
\|I_{k,T_j} &f\|_{L^p(\R^d,v;X)} = \ha*{\int_{\R^d} \nrm*{\int_{\R^d} k(x-y)T_j(x,y)f(y) \dd y}_{X}^p v(x) \dd x}^{\frac{1}{p}}\\
&\stackrel{(i)}{\leq}\ha*{ \int_{\R^d} \ha*{\int_{\R^d} |k(x-y)|\nrm{T_j(x,y)f(y)}_{X}\dd y}^p v(x) \dd x}^{\frac{1}{p}}\\
&\leq \ha*{ \int_{\R^d} \ha*{\int_{\R^d} |k(x-y)|\nrm{f(y)}_{X}\dd y}^p v(x) \dd x}^{\frac{1}{p}}\\
&\stackrel{(ii)}{\leq} \ha*{\int_{\R^d} \ha*{ M\ha{\nrm{f}_{X}}(x)}^p v(x) \dd x}^{\frac{1}{p}}
\stackrel{(iii)}{\leq} \alpha_{p,d}([v]_{A_{p}}) \, \nrm{ f}_{L^p(\R^d,v;X)}
\end{align*}
with $\alpha_{p,d}$ an increasing function on $\R_+$. This proves the lemma.
\end{proof}

\begin{theorem}
\label{thm:mainnew}
Let $1<p,q<\infty$ and write $X = L^q(\Omega)$. Assume the following conditions
\begin{enumerate}[(1)]
\item For all $\phi\in X$ and $j\in \J$, $(x,y)\mapsto T_j(x,y)\phi$ is measurable.
\item For all $s\in (1, \infty)$, $\T = \{T_j(x,y):x,y\in \R^d, j\in \J\}$ is $\ell^s$-bounded,
\end{enumerate}
Then for all $v\in A_p$ and all $s\in (1, \infty)$, the family of operators $\I_{T}\subseteq L^p(\R^d,v;X)$ as defined in \eqref{eq:IkTnieuw}, is $\ell^s$-bounded with $\Rr^s(\I_T) \leq C$ where $C$ depends on $p, q, d, s, [v]_{A_p}$ and on $\Rr^\sigma(\T)$ for $\sigma\in(1, \infty)$ and is $A_p$-consistent.
\end{theorem}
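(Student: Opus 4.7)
The plan is to combine three ingredients: Minkowski together with the $\ell^s$-boundedness of $\T$ to strip away the operators $T_{j_n}(x,y)$ pointwise; Proposition \ref{prop:lbdd1} to control the resulting convolution-type expression; and Theorem \ref{thm:ineq} to extrapolate in the outer $L^p(v)$-exponent. First I will fix $s\in(1,\infty)$ together with simple functions $(f_n)_{n=1}^N\in L^p(v;X)$, kernels $k_n\in\K$, and operators $T_{j_n}\in\T$, and recast the desired $\ell^s$-bound as a uniform estimate on $L^p(\R^d,v;X(\ell^s_N))$ for the operator $(\mathcal I\vec f)(x):=(I_{k_n,T_{j_n}}f_n(x))_n$, where $X(\ell^s_N)=L^q(\Omega;\ell^s_N)$ is the natural lift of $X=L^q(\Omega)$.

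Pointwise in $x$, Minkowski's integral inequality in $X(\ell^s_N)$, followed by the $\ell^s$-boundedness of $\T$ applied at each pair $(x,y)$ to the vectors $\phi_n:=k_n(x-y)f_n(y)\in X$, gives
\begin{equation*}
\|(\mathcal I\vec f)(x)\|_{X(\ell^s_N)}\le\Rr^s(\T)\int_{\R^d}\|(k_n(x-y)f_n(y))_n\|_{X(\ell^s_N)}\,dy.
\end{equation*}
This is the single point at which hypothesis~(2) enters directly. Taking the $L^p(v;dx)$-norm of this inequality exposes the crux: the integrand above \emph{dominates}, by Minkowski, the componentwise-convolution quantity $\|((|k_n|*|f_n|)(x))_n\|_{X(\ell^s_N)}$, while Proposition \ref{prop:lbdd1} gives $A_p$-consistent control only of the latter. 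To reverse this Minkowski-in-the-wrong-direction up to an $A_p$-consistent constant I will invoke the Rubio de Francia factorization highlighted in the introduction, using it to pass to $A_1$-majorants of dual testing functions (via $X(\ell^s_N)^*=L^{q'}(\Omega;\ell^{s'}_N)$ from Proposition \ref{prop:inters}) and thereby reduce the defect to a weighted estimate of the same form, which can be closed by combining the $\ell^{s'}$-boundedness of the adjoint family $\T^*$ with Proposition \ref{prop:lbdd1} in the weighted setting.

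The preceding step yields the $\ell^s$-bound with $A_{p_0}$-consistent constant at some convenient $p_0\in(1,\infty)$. Applying Theorem \ref{thm:ineq} to the pair of nonnegative scalar functions $F(x)=\|(\mathcal I\vec f)(x)\|_{X(\ell^s_N)}$ and $G(x)=\|\vec f(x)\|_{X(\ell^s_N)}$ then promotes the inequality to all $p\in(1,\infty)$ and all $v\in A_p$ with the $A_p$-consistent dependence asserted in the statement; a density argument finally removes the restriction to simple functions. The main obstacle will be precisely the Minkowski-reversal inside the inner $L^q(\Omega;\ell^s_N)$-norm, which a bare combination of Minkowski, Proposition \ref{prop:lbdd1}, and extrapolation cannot resolve on its own; it is the Rubio de Francia factorization that closes this gap. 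This route also explains why the final constant depends on $\Rr^\sigma(\T)$ for all $\sigma\in(1,\infty)$ rather than only on $\Rr^s(\T)$: the factorization brings in the adjoint family $\T^*$ and interpolation via Proposition \ref{prop:inters}.
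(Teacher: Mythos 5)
Your outline correctly identifies the toolbox (inner-space duality, the Rubio de Francia factorization, Proposition \ref{prop:lbdd1}, adjoints, and extrapolation in the outer exponent), but the argument as structured has a genuine gap, and in fact the reduction you commit to before invoking the factorization is already fatal. After applying Minkowski in $X(\ell^s_N)$ and then the $\ell^s$-boundedness of $\T$ pointwise in $(x,y)$, you are left with the task of bounding $\int_{\R^d}\|(k_n(x-y)f_n(y))_n\|_{L^q(\Omega;\ell^s_N)}\,dy$ in $L^p(\R^d,v)$ by $\|(\sum_n|f_n|^s)^{1/s}\|_{L^p(\R^d,v;L^q(\Omega))}$ with an $N$-independent constant. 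This inequality is false, so no subsequent use of factorization can repair it: take $\Omega$ a single atom, $f_n=f$ for all $n$, and $k_n=(2\epsilon_n)^{-d}\chi_{B(0,\epsilon_n)}\in\K$ with dyadic scales $\epsilon_n$; then $(\sum_n k_n(z)^s)^{1/s}\approx |z|^{-d}$ over the whole range of scales, so the left-hand side behaves like $N$ times an averaging operator applied to $|f|$, while the right-hand side is only $N^{1/s}\|f\|_{L^p(v)}$. The loss occurs exactly because you strip the operators away with absolute values inside the $y$-integral at the level of the $\ell^s$-valued norm; the "Minkowski-reversal'' you flag as the main obstacle is not a defect that can be closed by passing to $A_1$-majorants of dual testing functions --- as described, that step is a gesture at the tools rather than an argument, and it is being asked to prove something untrue.

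The paper's proof uses the factorization in a structurally different place. For $1\le s<q$ one first proves Proposition \ref{prop:mainprel}: raise to the $s$-th power, use Jensen's inequality (via $\|k_n\|_{L^1}\le 1$) to get $|I_nf_n(x)|^s\le\int|k_n(x-y)||T_{j_n}(x,y)f_n(y)|^s\,dy$, dualize the inner $L^{q/s}(\Omega)$-norm pointwise in $x$ with some $u\ge0$, and only then apply the Rubio de Francia factorization (the equivalence of $\ell^s$-boundedness of $\T$ with the existence of majorants $U$ satisfying \eqref{eq:factorizationLs}) to replace $T_{j_n}$ by $U$; undualizing leaves the quantity $\sum_n|k_n|*|f_n|^s$, which is handled by the endpoint case $s=1$ of Proposition \ref{prop:lbdd1} on $L^{p/s}(\R^d,v;L^{q/s}(\Omega))$ --- and this is precisely why one needs $s<q$, $s<p$ and $v\in A_{p/s}$ there, constraints your outline never confronts. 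The passage from this restricted statement to all $s\in(1,\infty)$, $p=q$ and all $v\in A_q$ then requires the openness of the $A_q$-class (Proposition \ref{prop:propweight}\eqref{it:w4}) to treat small $s$, duality ($\I_T^*=\I_{\tilde T}$ with $\tilde k(x)=k(-x)$, $\tilde T_j(x,y)=T_j^*(y,x)$) to treat large $s$, and interpolation via Proposition \ref{prop:inters} for the intermediate range, all with careful tracking of $A_q$-consistency; only afterwards does one extrapolate in the outer exponent with Theorem \ref{thm:baseweight}, which is the one step your proposal does carry out correctly. Incidentally, this also gives the true explanation of why the constant involves $\Rr^\sigma(\T)$ for several $\sigma$: the exponents $\sigma_1,\sigma_2'$ produced by the weight-openness and duality steps depend on $[v]_{A_q}$, not merely on the use of adjoints and interpolation.
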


\begin{example}\label{ex:weighted}
When $\Omega = \R^e$ with $\mu$ the Lebesgue measure and $q_0\in (1, \infty)$, then the weighted boundedness of each of the operators $T_j(x,y)$ on $L^{q_0}(\R^e,w)$ for all $A_{q_0}$-weights $w$ in an $A_{q_0}$-consistent way, is a sufficient condition for the $\ell^s$-boundedness which is assumed in Theorem \ref{thm:mainnew}.
Indeed, this follows from \cite[Corollary 3.12]{CMP} (also see Theorem \ref{thm:ineq}).

Usually, the weighted boundedness is simple to check with \cite[Theorem IV.3.9]{Rubioboek} or
\cite[Theorem 9.4.6]{GrafakosM}, because often for each $x,y\in\R^d$ and $j\in \J$, $T_j(x,y)$ is given by a Fourier multiplier operator in $\R^e$.
\end{example}

\begin{example}
Let $q\in (1, \infty)$. Let $T(t)  = e^{t\Delta}$ for $t\geq 0$ be the heat semigroup, where $\Delta$ is the Laplace operator on $\R^e$. Then it follows from the weighted Mihlin multiplier theorem \cite[Theorem IV.3.9]{Rubioboek}) that for all $w\in A_{q}$, $\|T(t)\|_{\calL(L^q(\R^e,w))}\leq C$, where $C$ is $A_q$-consistent. Therefore, as in Example \ref{ex:weighted}, $\{T(t):t\in \R_+\}$ is $\ell^s$-bounded on $L^q(\R^d,w)$ by an $A_q$-consistent $\Rr^s$-bound.

In order to give an example of an operator $I_{k,T}$ as in \eqref{eq:IkTnieuw}, we could let $T(x,y) = T(\phi(x,y))$, where $\phi:\R^d\times \R^d\to \R_+$ is measurable. Other examples can be given if one replaces the heat semigroup by a two parameter evolution family $T(t,s)$. As explained in the introduction, this is the setting of \cite{GV} (see Theorem \ref{thm:intro}).
\end{example}

To prove Theorem \ref{thm:mainnew} we will first show a result assuming $\ell^s$-boundedness for a fixed $s\in (1, \infty)$. Here we can also include $s=1$.
\begin{proposition}
\label{prop:mainprel}
Let $1\leq s<q<\infty$ and write $X = L^q(\Omega)$. Assume the following conditions
\begin{enumerate}[(1)]
\item For all $\phi\in X$ and all $j\in \J$, $(x,y)\mapsto T_j(x,y)\phi$ is measurable.
\item $\T = \{T_j(x,y):x,y\in \R^d, j\in \J\}$ is $\ell^s$-bounded.
\end{enumerate}
Then for all $p\in (s, \infty)$ and all  $v \in A_{\frac{p}{s}}$ the family of operators $\I_{T}\subseteq L^p(\R^d,v;X)$ defined as in \eqref{eq:IkTnieuw}, is $\ell^s$-bounded and there exist an increasing function $\alpha_{s,p,q,d}$ such that
\[\Rr^s(\I_{T})\leq \Rr^s(\T) \alpha_{s,p,q,d}([v]_{A_{\frac{p}{s}}}).\]
\end{proposition}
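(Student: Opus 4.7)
Write $r = p/s > 1$ and $u = q/s > 1$; by hypothesis $v \in A_{r}$. By density it suffices to bound the left-hand side of the $\ell^{s}$-inequality for finitely many non-negative simple functions $f_{1},\dots,f_{N}$. The idea is to produce a pointwise-in-$x$ estimate for $\sum_{n} |I_{k_{n}, T_{j_{n}}} f_{n}(x)|^{s}$ in $L^{u}(\Omega)$ by sums of scalar convolutions of $|k_{n}|$ with the non-negative functions $h_{n} := |f_{n}|^{s}$, and then to appeal to Proposition~\ref{prop:lbdd1} with $s = 1$, which already provides $\ell^{1}$-boundedness of such scalar convolutions on $L^{r}(\R^{d}, v; L^{u}(\Omega))$.

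For the pointwise step, since $\nrm{k_{n}}_{L^{1}} \le 1$, Jensen's inequality applied to the sub-probability measure $|k_{n}(x-y)|\dd y$ and to the convex function $t \mapsto t^{s}$ gives, for almost every $(x, \omega)$,
\[
    |I_{k_{n}, T_{j_{n}}} f_{n}(x)(\omega)|^{s} \le \int_{\R^{d}} |k_{n}(x-y)|\, |T_{j_{n}}(x,y) f_{n}(y)(\omega)|^{s}\dd y.
\]
Summing over $n$, taking the $L^{u}(\Omega)$-norm in $\omega$, and applying the $\ell^{s}$-boundedness of $\T$ in integrated form---discretise the $y$-integral and write the Riemann sum as $\sum_{n,k} |T_{j_{n}}(x, y_{k}) g_{n,k}|^{s}$ with $g_{n,k} := (|k_{n}(x-y_{k})| \Delta y_{k})^{1/s} f_{n}(y_{k})$, apply the discrete $\ell^{s}$-bound to the finite subfamily $\{T_{j_{n}}(x,y_{k})\}_{n,k}\se \T$, and pass to the limit using the measurability hypothesis~(1)---we obtain the pointwise bound
\[
    \nrmn{\sum_{n} |I_{k_{n}, T_{j_{n}}} f_{n}(x)|^{s}}_{L^{u}(\Omega)} \le \Rr^{s}(\T)^{s} \, \nrmn{\sum_{n} (|k_{n}| * h_{n})(x)}_{L^{u}(\Omega)}.
\]

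Since $|k_{n}| \in \K$ whenever $k_{n} \in \K$ and $v \in A_{r}$, Proposition~\ref{prop:lbdd1} with $s = 1$ applied to the family $\{T_{|k_{n}|}\}$ on $L^{r}(\R^{d}, v; L^{u}(\Omega))$ yields an $A_{r}$-consistent constant $\alpha_{r,u,d}$ such that
\[
    \nrmn{\sum_{n} |k_{n}| * h_{n}}_{L^{r}(v; L^{u})} \le \alpha_{r,u,d}([v]_{A_{r}}) \, \nrmn{\sum_{n} h_{n}}_{L^{r}(v; L^{u})}.
\]
Raising the previous pointwise estimate to the $r$-th power, integrating against $v(x)\dd x$, combining with this convolution bound, taking $s$-th roots, and using the identity $\nrm{g}_{L^{p}(v; L^{q})} = \nrm{g^{s}}_{L^{r}(v; L^{u})}^{1/s}$ valid for non-negative $g$, we conclude the claimed $\ell^{s}$-bound with constant $\Rr^{s}(\T)\,\alpha_{r,u,d}([v]_{A_{r}})^{1/s}$. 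The only delicate point is the passage from the discrete to the integrated form of the $\ell^{s}$-bound of $\T$; this is a routine approximation argument, but it is where the measurability hypothesis~(1) together with $\nrm{k_{n}}_{L^{1}} \le 1$ are crucially used.
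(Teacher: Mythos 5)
Your overall plan coincides with the paper's: Jensen's inequality (via $\|k_n\|_{L^1}\le1$) followed by a reduction to the $\ell^1$-boundedness of scalar convolutions from Proposition~\ref{prop:lbdd1} applied to $|f_n|^s$ in $L^{p/s}(\R^d,v;L^{q/s}(\Omega))$, and your bookkeeping of the constants (the $1/s$ power on the consistent factor, the single power of $\Rr^s(\T)$) is consistent with what the paper obtains.

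The gap is in the intermediate pointwise-in-$x$ estimate, which you present as a ``routine'' discretise-and-pass-to-the-limit argument. What you actually need there is the \emph{continuous} (or integrated) form of the $\ell^s$-bound of $\T$: for each fixed $x$,
\begin{equation*}
\Big\|\Big(\sum_n\int_{\R^d}|k_n(x-y)|\,|T_{j_n}(x,y)f_n(y)|^s\dd y\Big)^{1/s}\Big\|_{L^q}
\le \Rr^s(\T)\Big\|\Big(\sum_n\int_{\R^d}|k_n(x-y)|\,|f_n(y)|^s\dd y\Big)^{1/s}\Big\|_{L^q}.
\end{equation*}
Deriving this from the discrete Definition~\ref{def:ells} by Riemann sums does not go through under the hypotheses at hand: the kernel $y\mapsto|k_n(x-y)|$ is merely $L^1$ and not approximable by point-evaluation Riemann sums, and, more seriously, $y\mapsto T_{j_n}(x,y)\phi$ is only assumed measurable, so one cannot replace $T_{j_n}(x,y)$ by $T_{j_n}(x,y_k)$ on a cell $Q_k$ with any control. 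Approximating the $L^{q/s}(\Omega)$-valued integrand by simple functions of $y$ does not help either, since the resulting simple values are arbitrary elements of $L^q(\Omega)$, not of the form $T\phi$ with $T\in\T$, so the discrete $\ell^s$-bound no longer applies. In other words, the discrete-to-continuous passage is not a limiting argument but a genuine theorem---precisely the Rubio de Francia/Maurey--Nikishin factorization that the paper invokes from \cite[Theorem VI.5.2]{Rubioboek} or \cite[Theorem 9.5.8]{GrafakosM}. The paper's route is: realize the $L^{q/s}(\Omega)$-norm at a fixed $x$ by a dual element $u\ge0$ with $\|u\|_{L^{(q/s)'}}=1$, use the factorization to replace $u$ by $U$ with $\|U\|\le\|u\|$ so that $\int|T_j(x,y)\phi|^s u\,d\mu\le\int|\phi|^s U\,d\mu$ holds \emph{pointwise in $(x,y,j,\phi)$}, and then integrate the resulting scalar inequality in $y$ by Fubini. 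This linearization is what lets the discrete $\ell^s$-bound pass through the $y$-integral, and it is the step your discretisation shortcut silently assumes. To repair your argument you should replace the ``routine approximation'' paragraph by an explicit appeal to the factorization theorem, exactly as in the paper.
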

\begin{proof}
Without loss of generality we can assume $\Rr^s(\T) = 1$.
We start with a preliminary observation. By \cite[Theorem VI.5.2]{Rubioboek} or \cite[Theorem 9.5.8]{GrafakosM}, the $\ell^s$-boundedness is equivalent to the following: for every $u\geq 0$ in $L^{\frac{q}{q-s}}(\Omega)$ there exists a $U\in L^{\frac{q}{q-s}}(\Omega)$ such that
\begin{equation}\label{eq:factorizationLs}
\begin{aligned}
\|U\|_{L^{\frac{q}{q-s}}(\Omega)} &\leq \|u\|_{L^{\frac{q}{q-s}}(\Omega)},
\\ \int_\Omega |T_j(x,y)\phi|^s u \, d\mu& \leq \int_\Omega |\phi|^s U\, d\mu, \  \  x,y\in \Omega, \ j\in \J \ \phi\in L^q(\Omega).
\end{aligned}
\end{equation}

For $n=1,\cdots,N$ take $I_{k_n,T_{j_n}}\in \I_{T}$ and let $I_n = I_{k_n,T_{j_n}}$ where $j_1, \ldots, j_N\in \J$. Take $f_1,\cdots,f_N \in L^p(\R^d,v;X)$ and note that
\begin{equation*}
  \nrmn{\ha*{\sum_{n=1}^N\abs*{I_{n} f_n}^s}^{\frac{1}{s}}}_{L^p(\R^d,v;X)}= \nrmn{\sum_{n=1}^N\abs*{I_{n} f_n}^s}_{L^{\frac{p}{s}}\ha*{\R^d,v;L^{\frac{q}{s}}(\Omega)}}^{\frac{1}{s}}.
\end{equation*}
Let $r \in (1,\infty)$ be such that $\frac{1}{r}+\frac{s}{q}=1$ and fix $x \in \R^d$. As $L^r(\Omega) = L^{\frac{q}{s}}(\Omega)^*$, we can find a function $u \in L^r(\Omega)$, which will depend on $x$, with $u \geq 0$ and $\nrm{u}_{L^r(\Omega)} = 1$ such that
\begin{equation}
\label{eq:nrmdual}
  \nrmn{\sum_{n=1}^N\abs*{I_n f_n(x)}^s}_{L^{\frac{q}{s}}(\Omega)}=\sum_{n=1}^N\int_{\Omega}\abs*{I_n f_n(x)}^s u \dd \mu.
\end{equation}
By the observation in the beginning of the proof, there is a function $U\geq 0$ in $L^r(\Omega)$ (which depends on $x$ again) such that \eqref{eq:factorizationLs} holds.
Since $\nrm{k_n}_{L^1(\R^d)}\leq 1$, H\"older's inequality yields
\begin{equation}\label{eq:jensen}
  \abs{I_n f_n(x)}^s\leq \int_{\R^d}  |k_n(x-y)|\abs{T_{j_n}(x,y)f_n(y)}^s\dd y.
\end{equation}
Applying \eqref{eq:jensen} in $(i)$, estimate \eqref{eq:factorizationLs} in $(ii)$, and H\"older's inequality in $(iii)$, we get:
\begin{align*}
\sum_{n=1}^N \int_{\Omega}\abs*{I_nf_n(x)}^s u \dd \mu
& \stackrel{(i)}{\leq} \sum_{n=1}^N \int_{\Omega} \int_{\R^d} |k_n(x-y)| \abs{T_{j_n}(x,y)f_n(y)}^s\dd y \,  u \dd \mu\\
&=\sum_{n=1}^N\int_{\R^d} |k_n(x-y)| \int_{\Omega}\abs{T_{j_n}(x,y)f_n(y)}^s \, u \dd \mu\dd y\\
&\stackrel{(ii)}{\leq} \sum_{n=1}^N\int_{\R^d} |k_n(x-y)| \int_{\Omega}\abs{f_n(y)}^s \, U \dd \mu\dd y\\
&= \int_{\Omega} \sum_{n=1}^N\int_{\R^d} |k_n(x-y)|\abs{f_n(y)}^s \dd y \, U \dd \mu\\
&\stackrel{(iii)}{\leq} \nrmn{\sum_{n=1}^N\int_{\R^d} |k_n(x-y)|\abs{f_n(y)}^s\dd y}_{L^{\frac{q}{s}}(\Omega)}.
\end{align*}
Combining \eqref{eq:nrmdual} with the above estimate and applying the $\ell^1$-boundedness result of Proposition \ref{prop:lbdd1} to $\abs{f_n}^s \in {L^{\frac{p}{s}}\ha*{\R^d,v;L^{\frac{q}{s}}(\Omega)}}$ (here we use $v \in A_{\frac{p}{s}}$), we get
\begin{align*}
 \Big\| \Big(\sum_{n=1}^N\abs*{I_n f_n}^s\big)^{\frac{1}{s}}\Big\|_{L^p(\R^d,v;X)}  & \leq \nrmn{\sum_{n=1}^N\int_{\R^d} |k_n(\lcdot-y)|\abs{f_n(y)}^s\dd y}_{L^{\frac{p}{s}}\ha*{\R^d,v;L^{\frac{q}{s}}(\Omega)}}^{\frac{1}{s}}
 \\ &\leq \alpha_{p,q,s,d}([v]_{A_{\frac{p}{s}}})
  \nrmn{\sum_{n=1}^N \abs{f_n}^s}_{L^{\frac{p}{s}}\ha*{\R^d,v;L^{\frac{q}{s}}(\Omega^e,w)}}^{\frac{1}{s}}\\
 & = \alpha_{p,q,s,d}([v]_{A_{\frac{p}{s}}}) \nrmn{\ha*{\sum_{n=1}^N \abs{f_n}^s}^{\frac{1}{s}}}_{L^{p}\ha*{\R^d,v;X}}
\end{align*}
with $\alpha_{p,q,s,d}$ an increasing function on $\R_+$. This proves the $\ell^s$-boundedness.
\end{proof}

Next we prove Theorem \ref{thm:mainnew}. For a constant $\phi$ depending on a parameter $t\in I\subset \R$ , we write $\phi \propto t$ if $\phi_t\leq \phi_s$ whenever $t\leq s$ and $s,t\in I$.

\begin{proof}[Proof of Theorem \ref{thm:mainnew}]
Fix $q\in (1, \infty)$, $p=q$, $v\in A_q$ and $\kappa = 2[v]_{A_q}\geq 2$. The case $p\neq q$ will be considered at the end of the proof.

\textbf{Step 1.} First we prove the theorem for very small $s\in (1,q)$.
Proposition \ref{prop:propweight} gives $\sigma_1 = \sigma_{q,\kappa,d} \in (1, q)$ and $C_{q,\kappa,d}$ such that for all $s\in (1, \sigma_1]$ and all weights $u\in A_{q}$ with $[u]_{A_{q}}\leq \kappa$,
\[[u]_{A_{\frac{q}{s}}}\leq [u]_{A_{\frac{q}{\sigma}}}\leq C_{q,\kappa,d}.\]
Moreover, $\sigma_1 \propto \kappa^{-1}$ and $C\propto \kappa$.

By Proposition \ref{prop:mainprel}, $\I_{T}\subseteq \calL(L^q(\R^d,v;X))$ is $\ell^s$-bounded for all $s\in (1, \sigma_1)$ and
\begin{equation}\label{eq:Rsboundps}
\Rr^s(\I_{T})\leq \Rr^s(\T) \alpha_{s,q,d}([v]_{A_{\frac{q}{s}}})\leq \Rr^s(\T) \beta_{q,s,d,\kappa},
\end{equation}
with $\beta_{q,s,d,\kappa} = \alpha_{q,s,d}(C_{q,\kappa,d})$. Note that $\beta\propto \kappa$ and $\beta \propto s'$.

\textbf{Step 2.} Now we use a duality argument to prove the theorem for large $s\in (q, \infty)$.
By Proposition \ref{prop:propweight}, $v'\in A_{q'}$ and $\tilde{\kappa} = 2[v']_{A_{q'}} = 2[v]_{A_q}^{\frac{1}{q-1}} = 2(\kappa^{\frac{1}{q-1}})$.
Note that we can identify $X^*=L^{q'}(\Omega)$ and $L^q(\R^d,v;X)^*=L^{q'}(\R^d, v';X^*)$ by Proposition \ref{prop:duality}.
Define $\I_{T}^*=\{I^*:I \in \I_{T}\}$.

It is standard to check that for $I_{k,T_j} \in \I_{T}$ the adjoint $I_{k,T_j}^*$ satisfies
\begin{equation*}
I_{k,T_j}^* g(x)=\int_{\R^d} \tilde{k}(y-x)\tilde{T}_j(x,y)g(y)\dd y=I_{\tilde{k},\tilde{T}_j}g(x)
\end{equation*}
with $\tilde{k}(x)=k(-x)$ and $\tilde{T}_j(x,y)=T^*_j(y,x)$. As already noted before we have $\tilde{k}\in \K$.
Furthermore, by Proposition \ref{prop:inters} the adjoint family $\T^*$ is $\Rr^{s'}$-bounded with $\Rr^{s'}(\T^*) = \Rr^{s}(\T)$.
Therefore, it follows from Step 1 that there is a $\sigma_2 = \sigma_{q',\tilde{\kappa},d}\in (1, q')$
such that for all $s'\in (1, \sigma_2]$, $\I_{T}^*$ is $\ell^{s'}$-bounded on $L^{q'}(\R^d, v';X^*)$ and
using Proposition \ref{prop:inters} again, we obtain $\I_T$ is $\ell^s$-bounded and
\begin{equation}\label{eq:dualityIest}
\Rr^{s}(\I_T) = \Rr^{s'}(\I_T^*)\leq \Rr^{s'}(\T^*) \beta_{q',s',d,\tilde{\kappa}} = \Rr^{s}(\T) \beta_{q',s',d,\tilde{\kappa}}.
\end{equation}
Therefore, Proposition \ref{prop:inters} yields that $\I_T$ is $\ell^s$-bounded on $L^{q}(\R^d, v;X)$ for all $s\in [\sigma_2', \infty)$.

\textbf{Step 3.} We can now finish the proof in the case $p=q$ by an interpolation argument. In the previous steps 1 and 2 we have found $1<\sigma_1<q<\sigma_2'<\infty$ such that $\I_\alpha$ is $\ell^s$-bounded for all $s \in (1,\sigma_1]\cup[\sigma_2',\infty)$
with
\begin{equation}\label{eq:Rsgamma}
\Rr^s(\I_T)\leq \Rr^{s}(\T) \gamma_{q,s,d,\kappa}.
\end{equation}
where $\gamma_{q,s,d,\kappa} =  \beta_{q,s,d,\kappa}$ if $s\leq \sigma_1$ and $\gamma_{q,s,d,\kappa} = \beta_{q',s',d,\tilde{\kappa}}$ if $s\geq \sigma_2'$. Clearly, $\gamma := \gamma_{q,s,d,\kappa}$ satisfies  $\gamma\propto \kappa$, $\gamma \propto s'$ for $s\in (1, \sigma_1]$ and $\gamma \propto s$ for $s\in [\sigma_2', \infty)$. Moreover, $\sigma_1 \propto \frac{1}{\kappa}$  and $\sigma_2' \propto \kappa$.

Now Proposition \ref{prop:inters} yields the $\ell^s$-boundedness and the required estimates for the remaining $s\in [\sigma_1,\sigma_2']$ and by \eqref{eq:Rsgamma} we find
\begin{align*}
\Rr^s(\I_T)& \leq \max\{\Rr^{\sigma_1}(\I_T), \Rr^{\sigma_2'}(\I_T)\}
\\ & \leq \max\{\Rr^{\sigma_1}(\T),\Rr^{\sigma_2'}(\T)\} \gamma.
\end{align*}
where $\gamma = \max\{\gamma_{q,\sigma_1,d,\kappa},\gamma_{q',\sigma_2,d,\tilde{\kappa}}\}$.
By Example \ref{ex:estimatesforells}, $\Rr^{\sigma_1}(\T)\propto \kappa$ and $\Rr^{\sigma_2'}(\T)\propto \kappa$.
Also $\gamma\propto \kappa$ in the above. Therefore, the obtained $\Rr^s$-bound is $A_q$-consistent.

\textbf{Step 4.} Next let $p,q\in (1, \infty)$. Fix $s\in (1, \infty)$. For $n=1,\cdots,N$ take $I_{k_n,T_{j_n}}\in \I_{T}$ and let $I_n = I_{k_n,T_{j_n}}$. Take $f_1,\cdots,f_N \in L^p(\R^d,v;X)\cap L^q(\R^d,v;X)$ and let
\[F = \Big\|\Big(\sum_{n=1}^N |I_n f_n|^s\Big)^{\frac1s}\Big\|_X \ \ \text{and} \ \ G = \Big\|\Big(\sum_{n=1}^N |f_n|^s\Big)^{\frac1s}\Big\|_X.\]
By the previous step we know that for all $v\in A_{q}$,
\begin{align*}
\|F\|_{L^q(\R^d,v)}\leq C \|G\|_{L^q(\R^d,v)},
\end{align*}
where $C$ depends on $d$, $s$, $q$, and $[v]_{A_p}$ and is $A_p$-consistent. Therefore, by Theorem \ref{thm:baseweight} we can extrapolate to obtain for all $p\in (1, \infty)$ and $v\in A_p$,
\begin{align*}
\|F\|_{L^p(\R^d,v)}\leq \tilde{C}\|G\|_{L^p(\R^d,v)},
\end{align*}
where $\tilde{C}$ depends on $C$, $p$ and $[v]_{A_p}$ and is again $A_p$-consistent. This implies the required $\Rr^s$-boundedness for all $p,q\in (1, \infty)$ with constant $\tilde{C}$.
\end{proof}

\begin{corollary}\label{cor:weightedells}
Let $\Omega\subseteq \R^d$ be an open set. Let $1<p,q,q_0<\infty$. Assume the following conditions
\begin{enumerate}[(1)]
\item For all $\phi\in L^q(\Omega)$ and $j\in \J$, $(x,y)\mapsto T_j(x,y)\phi$ is measurable.
\item For all $w\in A_{q_0}$, $\displaystyle \sup_{j\in \J, x,y\in \Omega}\|T_j(x,y)\|_{\calL(L^{q_0}(\Omega,w))}\leq C$,
where $C$ is $A_{q_0}$-consistent.
\end{enumerate}
Then for all $v\in A_p$ all $w\in A_q$ and all $s\in (1, \infty)$, the family of operators $\I_{T}\subseteq L^p(\R^d,v;L^q(\Omega,w))$ as defined in \eqref{eq:IkTnieuw}, is $\ell^s$-bounded with $\Rr^s(\I_T) \leq \tilde{C}$ where $\tilde{C}$ depends on $p, q, d,s,[v]_{A_p},[w]_{A_q}$ and on $\Rr^\sigma(\T)$ for $\sigma\in(1, \infty)$ and is $A_p$- and $A_q$-consistent.
\end{corollary}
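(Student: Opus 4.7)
The plan is to deduce Corollary \ref{cor:weightedells} from Theorem \ref{thm:mainnew}, after upgrading hypothesis (2), which is just uniform boundedness on a single family of weighted $L^{q_0}$-spaces, into the full $\ell^s$-boundedness of $\T$ on $L^q(\Omega,w)$ for every $s \in (1,\infty)$ and every $w \in A_q$. Once that inner-space statement is in hand, Theorem \ref{thm:mainnew} applied with the $\sigma$-finite measure space $(\Omega, w\,d\lambda)$ (where $\lambda$ is Lebesgue measure) immediately yields the conclusion.

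First I would run the following extrapolation step. Fix $s \in (1,\infty)$, a finite subfamily $T_{j_1}(x_1,y_1),\dots,T_{j_N}(x_N,y_N)$ with $j_n \in \J$ and $x_n,y_n \in \R^d$, and functions $\phi_1,\dots,\phi_N \in L^q(\Omega,w)$. I apply Theorem \ref{thm:ineq} with $p_0 = q_0$, outer space $\R^d$, and the iterated inner space realized as $\ell^s_N$ by choosing $n = 1$, $\vect{q} = (s)$, and $(\{1,\dots,N\}, \text{counting measure})$. The pair of nonnegative measurable functions on $\Omega \times \{1,\dots,N\}$ is
$$ f(z,n) = |T_{j_n}(x_n,y_n)\phi_n(z)|,\qquad g(z,n) = |\phi_n(z)|. $$
Hypothesis (2) of the corollary provides, for each $n$ and each $A_{q_0}$-weight $w_0$, the estimate $\|f(\cdot,n)\|_{L^{q_0}(\Omega,w_0)} \leq C\|g(\cdot,n)\|_{L^{q_0}(\Omega,w_0)}$ with $C$ $A_{q_0}$-consistent. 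The conclusion of Theorem \ref{thm:ineq} at $p = q$ for any $w \in A_q$ then reads
$$ \Big\|\Big(\sum_{n=1}^N |T_{j_n}(x_n,y_n)\phi_n|^s\Big)^{1/s}\Big\|_{L^q(\Omega,w)} \leq C' \Big\|\Big(\sum_{n=1}^N |\phi_n|^s\Big)^{1/s}\Big\|_{L^q(\Omega,w)}, $$
with $C'$ depending on $q,q_0,d,s$ and $[w]_{A_q}$ in an $A_q$-consistent way. This is precisely the $\ell^s$-boundedness of $\T$ on $L^q(\Omega,w)$, and the case $s = q$ recovers the uniform-in-$(j,x,y)$ operator norm bound, so the normalization hidden in Definition \ref{def:ITsq} can be arranged by a harmless rescaling.

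Next I would apply Theorem \ref{thm:mainnew} with the inner $\sigma$-finite measure space $(\Omega, w\,d\lambda)$, so that the inner $L^q$ becomes $L^q(\Omega,w)$. Hypothesis (1) of Theorem \ref{thm:mainnew} is directly inherited from hypothesis (1) of the corollary, and hypothesis (2) of Theorem \ref{thm:mainnew}, the $\ell^s$-boundedness of $\T$ for every $s \in (1,\infty)$, was just established. The theorem then delivers the $\ell^s$-boundedness of $\I_T$ on $L^p(\R^d,v;L^q(\Omega,w))$ with a bound depending on $p,q,d,s,[v]_{A_p}$ and on the $\Rr^\sigma$-bounds of $\T$, in an $A_p$-consistent way.

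The one point that requires bookkeeping, rather than any new analysis, is to verify that the resulting constant is $A_p$-consistent in $v$ and $A_q$-consistent in $w$ simultaneously. The $A_p$-consistency is built into Theorem \ref{thm:mainnew}. The $A_q$-consistency follows because the dependence of $\Rr^\sigma(\T)$ on $[w]_{A_q}$ from Step 1 is monotone (consistency is preserved by the extrapolation), and the dependence of the bound in Theorem \ref{thm:mainnew} on $\Rr^\sigma(\T)$ is likewise monotone, as can be read off directly from Steps 1--4 of its proof; chaining these gives a final bound that is simultaneously $A_p$- and $A_q$-consistent. I do not expect any substantial obstacle, since all the real work has already been done in Theorems \ref{thm:ineq} and \ref{thm:mainnew}; the only subtle item is this monotonicity tracking.
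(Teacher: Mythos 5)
Your strategy matches the paper's: upgrade hypothesis (2) to $\ell^s$-boundedness of $\T$ on $L^q(\Omega,w)$ by extrapolation (the paper does this via Example \ref{ex:weighted}, citing \cite[Corollary~3.12]{CMP} and, as an alternative, Theorem~\ref{thm:ineq}, which is exactly the route you take), and then invoke Theorem~\ref{thm:mainnew}. The bookkeeping of $A_p$- and $A_q$-consistency you describe at the end is also in line with what the paper does.

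There is one real gap: the corollary is stated for a general open subset $\Omega\subseteq\R^e$, but Theorem~\ref{thm:ineq} (and the whole $A_p$-weight apparatus) is formulated on the full Euclidean space, not on proper open subsets. In your extrapolation step you take the ``outer'' variable $z$ to range over $\Omega$ and pair functions $f,g$ on $\Omega\times\{1,\dots,N\}$, which is not what Theorem~\ref{thm:ineq} covers when $\Omega\subsetneq\R^e$. The paper addresses this with an explicit restriction--extension argument: set $\tilde T_j(x,y)=E\,T_j(x,y)\,R$ where $E$ is extension by zero and $R$ is restriction to $\Omega$, observe $\|\tilde T_j(x,y)\|_{\calL(L^{q_0}(\R^e,w))}\le\|T_j(x,y)\|_{\calL(L^{q_0}(\Omega,w))}$, run the whole argument for the family $\tilde\T$ on $L^q(\R^e,w)$, and finally note that $I_{k,\tilde T_j}$ restricted to $L^p(\R^d,v;L^q(\Omega,w))$ agrees with $I_{k,T_j}$, so $\Rr^s(\I_T)\le\Rr^s(\I_{\tilde T})$. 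You should add this step (or state at the outset that you reduce to $\Omega=\R^e$ by extension by zero); as written, the proof only applies verbatim when $\Omega=\R^e$.
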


\begin{proof}
In the case $\Omega = \R^e$, note that Example \ref{ex:weighted} yields that for each $q\in (1, \infty)$ and each $w\in A_q$ and $s\in (1, \infty)$, $\T$ considered on $L^q(\Omega,w)$ is $\ell^s$-bounded. Moreover, $\Rr^s(\T)\leq K$, where $K$ depends on $q, s, e$ and $[w]_{A_q}$ in an $A_q$-consistent way. Therefore, the result follows from Theorem \ref{thm:mainnew}.

In the case $\Omega\subseteq \R^e$, we reduce to the case $\R^e$ by a restriction-extension argument. For convenience we sketch the details. Let $E:L^q(\Omega,w)\to L^q(\R^e,w)$ be the extension by zero and let $R:L^q(\R^e,w)\to L^q(\Omega,w)$ be the restriction to $\Omega$.
For every $x,y\in \R^d$ and $j\in \J$, let $\tilde{T}_j(x,y) = E T_j(x,y) R\in \calL(L^q(\R^e,w))$ and let $\tilde{\T} = \{\tilde{T}_j(x,y):x,y\in \R^d\}$. Since $\|\tilde{T}_j(x,y)\|_{\calL(L^q(\R^e,w))}\leq \|T_j(x,y)\|_{\calL(L^q(\Omega,w))}\leq C$, it follows from the case $\Omega=\R^e$ that
$\I_{\tilde{T}}\subseteq L^p(\R^d,v;L^q(\R^e,w))$ is $\ell^s$-bounded with $\Rr^s(\I_{\tilde{T}}) \leq \tilde{C}$. Now it remains to observe that the restriction of $I_{k,\tilde{T}_j}$ to $L^p(\R^d,v;L^q(\Omega,w))$ is equal to $I_{k,T_j}$ and hence $\Rr^s(\I_{T})\leq  \Rr^s(\I_{\tilde{T}}) \leq \tilde{C}$.
\end{proof}

Next we will prove Theorem \ref{thm:intro}. In order to do so we recall the definition of $\Rr$-boundedness.
\begin{definition}
\label{def:Rbdd}
Let $X$ and $Y$ be Banach spaces and let $(\varepsilon_{n})_{n\geq 1}$ be a Rademacher sequence on a probability space $(A, \mathscr{A}, \mathbb{P})$. A family of operators $\scrS \subseteq \B(X,Y)$ is said to be {\em $\Rr$-bounded} if there exists a constant $C$ such that for all integers $N$, for all sequences $(S_n)_{n=1}^N$ in $\scrS$ and $(x_n)_{n=1}^N$ in $X$,
\begin{equation*}
\Big\|\sum_{n=1}^N \varepsilon_n S_n x_n \Big\|_{L^2(A;Y)}\leq C\Big\|\sum_{n=1}^N \varepsilon_n x_n \Big\|_{L^2(A;Y)}
\end{equation*}
The least possible constant $C$ is called the {\em $\Rr$-bound} of $\scrS$ and is
denoted by $\Rr(\scrS)$.
\end{definition}

\begin{remark}\label{rem:ell2R}
For $X = Y =  L^{\overline{q}}(\Omega)$ with $q\in (1, \infty)^n$, the notions $\ell^2$-boundedness and $\Rr$-boundedness of any family $\scrS\subseteq \B(X,Y)$ coincide and $C^{-1} \Rr^2(\scrS)\leq \Rr(\scrS)\leq C \Rr^2(\scrS)$, where $C$ is a constant which only depends on $\overline{q}$. This assertion follows from the Kahane-Khintchine inequalities (see \cite[1.10 and 11.1]{DJT}).
\end{remark}

\begin{proof}[Proof of Theorem \ref{thm:intro}]
The result follows directly from Corollary \ref{cor:weightedells} and
Remark \ref{rem:ell2R} with $X = L^p(\R;L^q(\Omega))$.
\end{proof}

\appendix

\section{Duality of iterated $L^{\overline{q}}$-spaces}

Let $(\Omega_i, \Sigma_i, \mu_i)$ for $i=1, \ldots n$ be $\sigma$-finite measure spaces.
The dual of the iterated space $L^{\vect{q}}(\Omega)$ as defined in \eqref{eq:Lqit}, is exactly what one would expect. In a general setting one can prove that $L^p(\Omega;X)^*=L^{p'}(\Omega,X^*)$ for reflexive Banach function spaces $X$ from which the duality for $L^{\vect{q}}(\Omega)$ follows, as is done in \cite[Chapter IV]{DiestelUhl} using the so-called Radon-Nikodym property of Banach spaces. Here we present an elementary proof just for $L^{\vect{q}}(\Omega)$.
\begin{proposition}
\label{prop:duality}
Let $\vect{q}\in (1, \infty)^n$. For every bounded linear functional $\Phi $ on $L^{\overline{q}}(\Omega)$ there exists a unique $g \in L^{\overline{q}'}(\Omega)$ such that:
\begin{equation}
\label{eq:goal}
\Phi(f)=\int_\Omega fg \dd \mu
\end{equation}
for all $f \in L^{\overline{q}}$ and $\nrm{\Phi}=\nrm{g}_{ L^{\overline{q}'}(\Omega)}$, i.e. $L^{\overline{q}}(\Omega)^*=L^{\overline{q}'}(\Omega)$.
\end{proposition}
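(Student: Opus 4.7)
My plan is to proceed by induction on $n$. The base case $n = 1$ is the classical Riesz representation $L^{q_1}(\Omega_1)^* = L^{q_1'}(\Omega_1)$, valid on any $\sigma$-finite measure space. For the inductive step, set $Y := L^{(q_2,\ldots,q_n)}(\Omega_2 \times \cdots \times \Omega_n)$, so that $L^{\overline{q}}(\Omega) = L^{q_1}(\Omega_1; Y)$, and note that by the inductive hypothesis $Y^* = L^{(q_2',\ldots,q_n')}(\Omega_2 \times \cdots \times \Omega_n)$. The task reduces to the vector-valued duality $L^{q_1}(\Omega_1; Y)^* = L^{q_1'}(\Omega_1; Y^*)$.

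To produce the representing function, fix $\Phi \in L^{q_1}(\Omega_1; Y)^*$. For every $E \in \Sigma_1$ of finite measure the formula $F_E(y) := \Phi(\mathbf{1}_E\, y)$ defines an element of $Y^*$ with $\|F_E\|_{Y^*} \leq \|\Phi\|\, \mu_1(E)^{1/q_1}$; by the inductive hypothesis, $F_E$ is represented by some $h_E \in L^{(q_2',\ldots,q_n')}(\Omega_2 \times \cdots \times \Omega_n)$. Next I would define a scalar set function $\lambda(E \times B) := \Phi(\mathbf{1}_{E \times B})$ on measurable rectangles of finite measure, extend it by finite additivity to the generated algebra, and verify countable additivity using continuity of $\Phi$ together with dominated convergence in $L^{\overline{q}}$. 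Since $\mathbf{1}_C = 0$ in $L^{\overline{q}}$ whenever $C$ is $\mu$-null, $\lambda$ is absolutely continuous with respect to $\mu$, and the scalar Radon-Nikodym theorem applied on a $\sigma$-finite exhaustion provides $g \in L^1_{\mathrm{loc}}(\Omega)$ with $\lambda(E \times B) = \int_{E \times B} g\, d\mu$. Fubini then forces $h_E(x') = \int_E g(x_1, x')\, d\mu_1(x_1)$ for almost every $x'$, whence $\Phi(f) = \int_\Omega fg\, d\mu$ for every simple $f$.

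The main obstacle is upgrading $g$ to a genuine element of $L^{\overline{q}'}(\Omega)$ with $\|g\|_{L^{\overline{q}'}} = \|\Phi\|$. The inequality $\|\Phi\| \leq \|g\|_{L^{\overline{q}'}}$ is immediate from the iterated H\"older inequality applied to $\Phi(f) = \int fg\, d\mu$. For the reverse bound I would exploit the inductive structure: by inductive duality, for almost every $x_1$ the slice $g(x_1,\cdot)$ lies in $Y^*$ and its $L^{(q_2',\ldots,q_n')}$-norm equals the supremum of the pairings $\int g(x_1,\cdot)\, y$ over $y \in Y$ with $\|y\|_Y \leq 1$. Approximating this pointwise supremum by a measurable $Y$-valued selection $y(x_1)$ and pairing $\Phi$ with $\alpha(x_1)\, y(x_1)$ for $\alpha \in L^{q_1}(\Omega_1)$ reduces the outer estimate to scalar $L^{q_1}$-duality, yielding $\bigl\|\|g(x_1,\cdot)\|_{Y^*}\bigr\|_{L^{q_1'}(\Omega_1)} \leq \|\Phi\|$. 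A standard truncation of $g$ to bounded functions supported on sets of finite $\mu$-measure ensures integrability throughout, and monotone convergence closes the bound. Uniqueness of $g$ follows since any two representing functions agree against every simple function, and the identity $\Phi(f) = \int_\Omega fg\, d\mu$ extends from simple $f$ to all $f \in L^{\overline{q}}(\Omega)$ by norm-density, completing the induction.
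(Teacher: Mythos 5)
The core skeleton of your argument (scalar Radon--Nikodym on the product, truncation of $g$, construction of a near-norming test function, Fatou) is the same as the paper's, but you frame the norm estimate as an induction reducing to the vector-valued duality $L^{q_1}(\Omega_1;Y)^*=L^{q_1'}(\Omega_1;Y^*)$, whereas the paper writes down, in all variables at once, the explicit test function
$f(s)=\overline{\alpha}\,|g_k(s)|^{q_n'-1}\prod_{i=2}^{n}\|g_k(s_1,\ldots,s_{i-1},\lcdot)\|_{L^{q_i'}\cdots L^{q_n'}}^{q_{i-1}'-q_i'}$
and verifies the two identities $\int fg_k\,d\mu=\|g_k\|_{L^{\overline{q}'}}^{q_1'}$ and $\|f\|_{L^{\overline{q}}}=\|g_k\|_{L^{\overline{q}'}}^{q_1'/q_1}$ directly.

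The one genuine gap in your plan is the phrase ``approximating this pointwise supremum by a measurable $Y$-valued selection $y(x_1)$.'' The measure spaces here are only assumed $\sigma$-finite, so $Y=L^{(q_2,\ldots,q_n)}$ need not be separable, and a general measurable-selection theorem is not available. What saves the argument is that $Y^*$ is itself an iterated $L^{\overline{r}'}$-space (by the inductive hypothesis), so the norming element for $g(x_1,\lcdot)$ can be \emph{written down} by the same nested signed-power recipe displayed above, and this formula is manifestly jointly measurable in $(x_1,s')$. But once you make the selection explicit in this way, the inductive step collapses into exactly the paper's direct computation; the induction does not actually save any work. Two smaller points: (i) there is no need to pass through a premeasure on rectangles and a complex Carath\'eodory extension---after reducing to $\mu(\Omega)<\infty$ one can simply set $\lambda(E)=\Phi(\chi_E)$ for \emph{all} $E\in\Sigma$, check it is a complex measure absolutely continuous with respect to $\mu$, and apply Radon--Nikodym directly on $\Omega$; (ii) the inequality $\|\Phi\|\le\|g\|_{L^{\overline{q}'}}$ should be stated only after $g\in L^{\overline{q}'}$ has been established, since before that the representation $\Phi(f)=\int fg\,d\mu$ is only known for simple (or bounded) $f$.
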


\begin{proof}
We follow the strategy of proof from \cite[Theorem 6.16]{RudinRC}. The uniqueness proof is as in \cite[Theorem 6.16]{RudinRC}.
Also by repeatedly applying H\" older's inequality we have for any $g$ satisfying \eqref{eq:goal} that
\begin{equation}
\label{eq:equality}
\nrm{\Phi}\leq \nrm{g}_{L^{\overline{q}'}(\Omega)}.
\end{equation}
So it remains to prove that $g$ exists and that equality holds in \eqref{eq:equality}. As in \cite[Theorem 6.16]{RudinRC} one can reduce to the case $\mu(\Omega)<\infty$. Define $\lambda(E)=\Phi(\chi_E)$ for $E \in \Sigma$.
Then one can check that $\lambda$ is a complex measure which is absolutely continuous with respect to $\mu$. So by the Radon-Nikodym Theorem \cite[Theorem 6.10]{RudinRC} we can find a $g \in L^1(\Omega)$ such that for all measurable $E \subseteq \Omega$
\begin{equation*}
\Phi(\chi_E)=\int_Eg\dd \mu = \int_\Omega \chi_E g \dd \mu
\end{equation*}
and from this we get by linearity
$\Phi(f)= \int_\Omega f g \dd \mu$ for all simple functions $f$. Now take a $f \in L^\infty(\Omega)$ arbitrary and let $f_i$ be simple functions such that $\nrm{f_i-f}_{L^\infty(\Omega)}\to 0$ for $i \to \infty$. Then since $\mu(\Omega)<\infty$ we have $\nrm{f_i-f}_{L^{\overline{q}}(\Omega)}\to 0$ for $i \to \infty$. Hence
\begin{equation}
\label{eq:infty}
\Phi(f)=\lim_{i \to \infty} \Phi(f_i)=\lim_{i \to \infty} \int_\Omega f_i g \dd\mu= \int_\Omega f g \dd\mu.
\end{equation}

We will now prove that $g \in L^{\overline{q}'}(\Omega)$ and that equality holds in \eqref{eq:equality}. Take $k \in \N$ arbitrary. Let $E_k^1=\{s \in \Omega:\frac{1}{k} \leq \abs{g(s)}\leq k\}$ and define for $i=2,\cdots,n$
\begin{equation*}
  E_k^i=\cbrace*{s \in \Omega: \nrm*{g_k(s_1,\cdots,s_{i-1},\lcdot)}_{L^{q'_{i}}(\Omega_i,\cdots L^{q'_n}(\Omega_n))}\geq \frac{1}{k}}
\end{equation*}
Now take $g_k=g\prod_{i=1}^n\chi_{E^i_k}$ and let $\alpha$ be its complex sign function, i.e. $\abs{\alpha}=1$ and $\alpha \abs{g_k} = g_k$. Take
\begin{equation*}
f(s)=\overline{\alpha}\abs{g_k(s)}^{q_n'-1}\prod_{i=2}^{n}\nrm*{g_k(s_1,\cdots,s_{i-1},\lcdot)}^{q_{i-1}'-q_{i}'}_{L^{q'_{i}}(\Omega_i,\cdots L^{q'_n}(\Omega_n))}
\end{equation*}
where we define $0 \cdot \infty = 0$. Then $f \in L^\infty(\Omega)$ and one readily checks that
\begin{equation}
\label{eq:fg}
\begin{aligned}
\int_\Omega fg_k\dd \mu = \nrm{g_k}_{L^{\overline{q}'}(\Omega)}^{q_1'} \ \   \text{and} \ \  \nrm*{f}_{L^{\overline{q}}(\Omega)}&=\nrm{g_k}_{L^{\overline{q}'}(\Omega)}^{\frac{q_1'}{q_1}}.
\end{aligned}
\end{equation}
So from \eqref{eq:fg} we obtain
\begin{equation*}
\nrm{g_k}_{L^{\overline{q}'}(\Omega)}^{q_1'}=\int_\Omega fg_k\dd \mu = \Phi(f)\leq \nrm*{f}_{L^{\overline{q}}(\Omega)} \nrm{\Phi}= \nrm{g_k}_{L^{\overline{q}'}(\Omega)}^{\frac{q_1'}{q_1}} \nrm{\Phi}
\end{equation*}
which means $\nrm{g_k}_{L^{\overline{q}'}(\Omega)}\leq \nrm{\Phi}$. Since this holds for all $k \in \N$ we obtain by Fatou's lemma that $\nrm{g}_{L^{\overline{q}'}(\Omega)}\leq \nrm{\Phi}$, which proves that $g \in L^{\overline{q}'}(\Omega)$ and $\nrm{g}_{L^{\overline{q}'}(\Omega)}= \nrm{\Phi}$. From this we also get \eqref{eq:infty} for all $f \in {L^{\overline{q}'}(\Omega)}$ by H\"olders inequality and the dominated convergence theorem. This proves the required result.
\end{proof}

To obtain the duality result in Proposition \ref{prop:inters} for $s=1$ and $s=\infty$, one also needs the following end-point duality result.
Let $X(\ell^s_N)$ be the space of all $N$-tuples $(f_n)_{n=1}^N\in X^N$ with
\[\|(f_n)_{n=1}^N\|_{X(\ell^s_N)} = \Big\|\Big(\sum_{n=1}^N |f_n|^s\Big)^{1/s}\Big\|_X\]
with the usual modification if $s=\infty$.
\begin{lemma}
\label{lem:nietref}
  Define $X=L^{\overline{q}}(\Omega)$. Take $s \in [1,\infty]$ and $N \in \N$. Then for every bounded linear functional $\Phi$ on $X(\l^s_N)$ there exists a unique $g \in X^*(\l^{s'}_N)$ such that
  \begin{equation*}
    \Phi(f)=\sum_{i=1}^N\ip{f_i,g_i}_{X,X^*}
  \end{equation*}
  for all $f \in X(\l^s_N)$ and $\nrm{\Phi}=\nrm{g}_{X^*(\l^{s'}_N)}$, i.e.  $X(\l^{s}_N)^*=X^*(\l^{s'}_N)$.
\end{lemma}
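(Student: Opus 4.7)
The plan is to treat separately the generic case $s \in (1, \infty)$ and the endpoints $s \in \{1, \infty\}$. For $s \in (1, \infty)$ the cleanest route is to recognise $X(\ell^s_N)$ as an iterated Lebesgue space: the map $(f_i)_{i=1}^N \mapsto F$ with $F(\omega, i) := f_i(\omega)$ is an isometric isomorphism $X(\ell^s_N) \cong L^{\overline{r}}(\Omega \times \{1, \ldots, N\})$ where $\overline{r} = (q_1, \ldots, q_n, s) \in (1, \infty)^{n+1}$ and $\{1, \ldots, N\}$ carries counting measure. Since counting measure on a finite set is $\sigma$-finite, Proposition \ref{prop:duality} identifies the dual with $L^{\overline{r}'}(\Omega \times \{1, \ldots, N\}) = X^*(\ell^{s'}_N)$, and the pairing transports to $\sum_{i=1}^N \int_\Omega f_i g_i \, d\mu$ as required.

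For the endpoints this identification falls outside the scope of Proposition \ref{prop:duality}, so I would build $g = (g_i)_{i=1}^N$ coordinatewise. The insertion $\iota_i : X \to X(\ell^s_N)$ placing $h$ in the $i$-th slot and zero elsewhere is isometric for every $s \in [1, \infty]$, so $\Phi \circ \iota_i \in X^*$ has norm at most $\|\Phi\|$, and Proposition \ref{prop:duality} supplies a unique $g_i \in L^{\overline{q}'}(\Omega)$ representing it. By linearity $\Phi(f) = \sum_{i=1}^N \int_\Omega f_i g_i \, d\mu$ for every $f \in X(\ell^s_N)$, and uniqueness of $g$ is inherited slot by slot from Proposition \ref{prop:duality}. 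The inequality $\|\Phi\| \leq \|g\|_{X^*(\ell^{s'}_N)}$ is immediate from H\"older in the $\ell^s$--$\ell^{s'}$ pairing combined with Proposition \ref{prop:duality}.

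The real work is the reverse inequality $\|g\|_{X^*(\ell^{s'}_N)} \leq \|\Phi\|$, which requires a well-adapted test tuple in each endpoint. For $s = \infty$, let $\alpha_i$ be a measurable function with $|\alpha_i| \leq 1$ and $\alpha_i g_i = |g_i|$ (take $\alpha_i = \overline{g_i}/|g_i|$ on $\{g_i \neq 0\}$, zero elsewhere); for $h \geq 0$ in $X$ of norm one, set $f_i := \alpha_i h$. Then $\|f\|_{X(\ell^\infty_N)} \leq \|h\|_X$ and $\Phi(f) = \int_\Omega h \sum_i |g_i| \, d\mu$, so the characterisation of the $L^{\overline{q}'}$-norm by duality against nonnegative $L^{\overline{q}}$-functions (itself a consequence of Proposition \ref{prop:duality}) yields $\|\sum_i |g_i|\|_{X^*} \leq \|\Phi\|$. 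For $s = 1$, partition $\Omega$ measurably into sets $E_1, \ldots, E_N$ on which $|g_i|$ attains $\max_j |g_j|$, breaking ties by index, and test with $f_i := \chi_{E_i} \alpha_i h$; then $\|f\|_{X(\ell^1_N)} = \|h\|_X$ and $\Phi(f) = \int_\Omega h \max_i |g_i| \, d\mu$, giving $\|\max_i |g_i|\|_{X^*} \leq \|\Phi\|$ in the same manner. The only bookkeeping point is the measurability of the $\alpha_i$ and of the partition sets $E_i$; both are immediate once the $g_i$ are fixed measurable functions, so I do not expect any hidden obstacle beyond this standard verification.
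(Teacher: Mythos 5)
Your argument is correct, and for $s\in(1,\infty)$ it coincides with what the paper has in mind: identify $X(\ell^s_N)$ with the iterated space $L^{(q_1,\dots,q_n,s)}(\Omega\times\{1,\dots,N\})$ and invoke Proposition~\ref{prop:duality}. The divergence is at the endpoints. The paper handles $s=1$ and $s=\infty$ by a limiting argument: using the norm comparisons \eqref{eq:eindigineq} (with constants $N^{1-1/r}$ and $N^{1/r}$ tending to $1$), it sends $r\downarrow 1$ resp.\ $r\uparrow\infty$ in the already-established identity $\|\Phi\|_{X(\ell^r_N)^*}=\|g\|_{X^*(\ell^{r'}_N)}$, noting that the representing tuple $g$ is independent of $r$ since it is determined coordinatewise. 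You instead work the endpoints directly: build $g$ coordinatewise via the insertion operators $\iota_i$, and then prove $\|g\|_{X^*(\ell^{s'}_N)}\leq\|\Phi\|$ by exhibiting explicit near-extremizing test tuples (sign functions for $s=\infty$, a measurable partition on which the pointwise maximum is attained for $s=1$). Both routes are sound. The paper's is shorter and avoids any extremizer construction, at the modest cost of implicitly relying on the $r$-independence of $g$ and the convergence of constants; yours is more self-contained and makes the mechanism behind the isometry visible (in particular it does not presuppose the $(1,\infty)$ case at the endpoints). One small remark on your write-up: in the $s=1$ construction the tuple $f_i=\chi_{E_i}\alpha_i h$ gives $\sum_i|f_i|\leq h$ with equality only where all $g_i$ are nonzero, so $\|f\|_{X(\ell^1_N)}\leq\|h\|_X$ rather than equality; this does not affect the conclusion, since the inequality is in the direction needed for the bound on $\|\Phi\|$.
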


Also this result can be proved with elementary arguments.
Indeed, for $r_1,r_2 \in [1, \infty]$ we have $X(\l^{r_1}_N) = X(\l^{r_2}_N)$ as sets and the following inequalities hold for all $f \in X(\l^{r}_N)$ and $r \in [1,\infty]$
  \begin{equation}
  \label{eq:eindigineq}
  \begin{aligned}
        \nrm{f}_{X(\l^{r}_N)}&\leq\nrm{f}_{X(\l^{1}_N)}\leq N^{1-\frac{1}{r}}\nrm{f}_{X(\l^{r}_N)}\\
        \nrm{f}_{X(\l^{\infty}_N)}&\leq\nrm{f}_{X(\l^{r}_N)}\leq N^{\frac{1}{r}}\nrm{f}_{X(\l^{\infty}_N)}.
  \end{aligned}
\end{equation}
Now the lemma readily follows from $X(\ell^r_N)^* = X^*(\ell^{r'}_N)$ for $r\in (1, \infty)$ and letting $r\downarrow 1$ and $r\uparrow \infty$.

\end{document}